\def\benm{\begin{enumerate}}
\def\eenm{\end{enumerate}}
\def\bal{\begin{align}}
\def\eal{\end{align}}
\def\ii{\mathrm{i}}
\newtheorem{theorem}{Theorem}[section]
\theoremstyle{definition}
\newtheorem{example}[theorem]{Example}
\newtheorem{proposition}[theorem]{Proposition}
\newtheorem{corollary}[theorem]{Corollary}
\theoremstyle{remark}
\newtheorem{remark}[theorem]{Remark}
\numberwithin{equation}{section}
\newcommand{\bfomega}{\mbox{\boldmath $\omega$ \unboldmath} \hskip -0.05 true in}
\newcommand{\bom}{\bfomega}
\newcommand{\dd}{\mathrm{d}}
\begin{document}

\title[Banach Modules of Group Algebras on Covariant Functions]{Banach Convolution Modules of Group Algebras on Covariant Functions of Characters of Normal Subgroups}

%    Information for first author
\author[A. Ghaani Farashahi]{Arash Ghaani Farashahi}
%    Address of record for the research reported here
\address{Department of Pure Mathematics, School of Mathematics, University of Leeds, Leeds LS2 9JT, United Kingdom}
\email{a.ghaanifarashahi@leeds.ac.uk}
\email{ghaanifarashahi@outlook.com}

%    Current address
\curraddr{}
%    \thanks will become a 1st page footnote.

%\thanks{The first author was supported in part by NSF Grant \#000000.}

%   Information for second author

%\thanks{Support information for the second author.}

%   General info
\subjclass[2010]{Primary 43A15, 43A20, 43A85.}

%\date{\bf\today}

%\dedicatory{This paper is dedicated to our advisors.}

\keywords{covariance property, normal subgroup, covariant function, character, convolution, Banach module.}
%\thanks{$^*$Corresponding author}
\thanks{E-mail addresses: a.ghaanifarashahi@leeds.ac.uk (Arash Ghaani Farashahi) }

\begin{abstract}
This paper investigates structure of Banach convolution modules induced by group algebras on covariant functions of characters of closed normal subgroups. Let $G$ be a locally compact group with the group algebra $L^1(G)$ and $N$ be a closed normal subgroup of $G$. Suppose that $\xi:N\to\mathbb{T}$ is a continuous character, $1\le p<\infty$ and $L_\xi^p(G,N)$ is the $L^p$-space of all covariant functions of $\xi$ on $G$. It is shown that $L^p_\xi(G,N)$ is a Banach $L^1(G)$-module. We then study convolution module actions of group algebras on covariant functions of characters for the case of canonical normal subgroups in semi-direct product groups. 
\end{abstract}

\maketitle

%\tableofcontents
\section{\bf{Introduction}}
The covariant functions of characters (one-dimensional continuous irreducible unitary representations) of closed subgroups have applications in different mathematical areas such as number theory (automorphic forms), induced representations, homogeneous spaces, complex (hypercomplex) analysis, and coherent states, see \cite{RB, FollP, kan.taylor, kisil.BJMA, kisil.book, kisil1, MackII, MackI, pere}.

In general terms, classical harmonic analysis techniques cannot be employed as a unified theory for covariant functions of a given closed subgroup. In the case of a compact subgroup, harmonic analysis methods for covariant functions of characters discussed in \cite{AGHF.cov.com}. Some operator theoretic aspects related to Banach covariant function spaces of characters of normal subgroups investigated in \cite{AGHF.ch.normal.main}. The following paper studies structure of Banach convolution modules induced by the group algebras on Banach covariant function spaces of characters of normal subgroups. The convolution module action of group algebras on covariant functions extends well-known structures of abstract harmonic analysis in different directions. The module structure coincides with the convolution of integrable functions on the group when the normal subgroup is compact. Further, it also gives canonical extension of classical methods for abstract harmonic analysis of Banach convolution modules on quotient groups when the character is the trivial character of the normal subgroup, see \cite{Bour, Brac, der1983, Fei1, Fei2, kan.lau}. 

This article contains 5 sections and organized as follows. Section  2  is  devoted  to  fix  notations  and  provides  a  summary
of classical harmonic analysis on locally compact compact groups and quotient groups of locally compact groups. 
Suppose that $G$ is a locally compact group with the group algebra $L^1(G)$ and $N$ is a closed normal subgroup of $G$. Let $\xi:N\to\mathbb{T}$ be a fixed character of $N$, and $1\le p<\infty$. Assume that $L^p_\xi(G,N)$ is the $L^p$-space of covariant functions of $\xi$ in $G$. In section 3 we study harmonic analysis on covariant functions of $\xi$. We then consider convolution module action of continuous compactly supported functions on continuous covariant functions of $\xi$. Next we extend the convolution module action to the Banach algebra $L^1(G)$ and the Banach spaces $L^p_\xi(G,N)$. We then discuss abstract harmonic analysis aspects of this extension. It is shown that the extended convolution module action makes $L^p_\xi(G,N)$ into a Banach $L^1(G)$-module. 
Section 5 investigates convolution module action of semi-direct product group algebras on covariant functions of characters of normal subgroups with some examples including abstract Weyl-Heisenberg groups and Heisenberg groups.  

\section{\bf{Preliminaries and Notations}}

Let $X$ be a locally compact Hausdorff space. Then $\mathcal{C}_c(X)$ denotes the space of all continuous complex valued functions on $X$ with compact support. 
Suppose that $\lambda$ is a positive Radon measure on $X$ and $1\le p<\infty$. The Banach space of equivalence classes of $\lambda$-measurable complex valued functions $f:X\to\mathbb{C}$ such that
$$\|f\|_{L^p(X,\lambda)}:=\left(\int_X|f(x)|^p\dd\lambda(x)\right)^{1/p}<\infty,$$
is denoted by $L^p(X,\lambda)$ which contains $\mathcal{C}_c(X)$ as a $\|\cdot\|_{L^p(X,\lambda)}$-dense subspace.

Let $G$ be a locally compact group and $\lambda_{G}$ be a left Haar measure on $G$. For a function $f:G\to\mathbb{C}$ and $x\in G$, the functions $L_xf,R_xf:G\to\mathbb{C}$ are given by $L_xf(y):=f(x^{-1}y)$ and $R_xf(y):=f(yx)$ for $y\in G$.
For $1\le p<\infty$, $L^p(G)$ stands for the Banach function space $L^p(G,\lambda_G)$. The convolution of $f,g\in L^1(G)$ is given by 
\begin{equation}\label{f.ast.g}
f\ast_G g(x):=\int_Gf(y)g(y^{-1}x)\dd\lambda_G(y)\quad (x\in G).
\end{equation}
Then $f\ast_G g\in L^p(G)$ with 
$\|f\ast_G g\|_{L^p(G)}\le \|f\|_{L^1(G)}\|g\|_{L^p(G)}$, if $f\in L^1(G)$ and $g\in L^p(G)$.
It is known in abstract harmonic analysis that the Banach function space $L^1(G)$ is a Banach algebra with respect to the bilinear product $\ast_G:L^1(G)\times L^1(G)\to L^1(G)$ given by $(f,g)\mapsto f\ast_G g$. For $p>1$, the Banach function space $L^p(G)$ is a Banach left $L^1(G)$-module equipped with  
the left module action $\ast_G:L^1(G)\times L^p(G)\to L^p(G)$ 
given by $(f,g)\mapsto f\ast_G g$, see \cite{der00, HR1, 50} and the classical list of references therein. 

Let $N$ be a closed normal subgroup of $G$ and $G/N$ be the factor (quotient) group of $N$ in $G$. Suppose that $\lambda_N$ is a left Haar measure on $N$. The function space $\mathcal{C}_c(G/N)$ consists of all functions $T_N(f)$, where 
$f\in\mathcal{C}_c(G)$ and
\begin{equation*}
T_N(f)(xN)=\int_Nf(xs)\dd\lambda_{N}(s)\quad (xN\in G/N).
\end{equation*}
The quotient group $G/N$ has a left Haar measure $\lambda_{G/N}$, normalized with respect to left Haar measures $\lambda_N$ on $N$ and $\lambda_K$ on $K$, according to the following Weil's formula  
\begin{equation}\label{6}
\int_{G/N}T_N(f)(xN)\dd\lambda_{G/N}(xN)=\int_Gf(x)\dd\lambda_G(x),
\end{equation}
for every $f\in L^1(G)$, see \cite{FollH}. 
Then the convolution module action of $\Psi\in L^1(G/N)$ and $\Phi\in L^p(G/N)$ has the form 
\begin{equation}
\Psi\ast_{G/N}\Phi(xN)=\int_{G/N}\Psi(yN)\Phi(y^{-1}xN)\dd\lambda_{G/N}(yN),
\end{equation}
for $xN\in G/N$ and the linear operator $T_N:L^1(G)\to L^1(G/N)$
is a norm-decreasing homomorphism of Banach algebras, see Theorem 3.5.4 of \cite{50}.   

A character $\xi$ of $N$, is a continuous group homomorphism $\xi:N\to\mathbb{T}$, where $\mathbb{T}:=\{z\in\mathbb{C}:|z|=1\}$ is the circle group. In the context of representation theory for groups, every character of $N$ is a 1-dimensional irreducible continuous unitary representation of $N$. Then we denote the set of all characters of $N$ by $\chi(N)$. A function $\psi:G\to\mathbb{C}$ satisfies covariant property for the character $\xi\in\chi(N)$, if 
\begin{equation}\label{cov.prop.main}
\psi(xs)=\xi(s)\psi(x),
\end{equation}
for $x\in G$ and $s\in N$. In this case, $\psi$ is called a covariant function of $\xi$. 

\section{\bf Abstract Harmonic Analysis of Covariant Functions}
Throughout this section, we study some of the theoretical aspects of covariant functions of characters of closed normal subgroups. The covariant functions arise in abstract harmonic analysis in the construction of induced representations, see \cite{FollH, kan.taylor}. We here exploit one of the classical approaches and tools in this direction. 

Suppose that $G$ is a locally compact group and $N$ is a closed normal subgroup of $G$. Let $\lambda_{G}$ be a left Haar measure on $G$ and $\lambda_N$ be a left Haar measure on $N$.
Assume that $\lambda_{G/N}$ is the left Haar measure on the quotient group $G/N$ normalized with respect to Weil's formula (\ref{6}). For $\xi\in\chi(N)$ and $f\in\mathcal{C}_c(G)$, define the function $T_\xi(f):G\to\mathbb{C}$ by  
\[
T_\xi(f)(x):=\int_Nf(xs)\overline{\xi(s)}\dd\lambda_N(s),\ \ \ {\rm for}\ x\in G. 
\]
Suppose that $\mathcal{C}_\xi(G,N)$ is the linear subspace of $\mathcal{C}(G)$ given by 
\[
\mathcal{C}_\xi(G,N):=\{\psi\in\mathcal{C}_c(G|N):\psi(xk)=\xi(k)\psi(x),\ {\rm for\ all}\ x\in G,\ k\in N\},
\]
where 
\[
\mathcal{C}_c(G|N):=\{\psi\in\mathcal{C}(G):\mathrm{q}({\rm supp}(\psi))\ {\rm is\ compact\ in}\ G/N\},
\]
and $\mathrm{q}:G\to G/N$ is the canonical map given by $\mathrm{q}(x):=xN$ for every $x\in G$. 
Then the linear operator $T_\xi$ maps $\mathcal{C}_c(G)$ onto $\mathcal{C}_\xi(G,N)$, see Proposition 6.1 of \cite{FollH}.

\begin{remark}\label{J1}
If $\xi=1$ is the trivial character of $N$ then $T_1(f)=T_N(f)$.
Then $\mathcal{C}_1(G,N)$ consists of functions on $G$ which are constant on cosets of $N$. Therefore, the function space $\mathcal{C}_1(G,N)$ can be identified with $\mathcal{C}_c(G/N)$ via the identification $\psi\mapsto\widetilde{\psi}$, where $\widetilde{\psi}:G/N\to\mathbb{C}$ is $\widetilde{\psi}(xN):=\psi(x)$ for $x\in G$. 
\end{remark}
For every $1\le p<\infty$ and $\psi\in \mathcal{C}_\xi(G,N)$, define the norm 
\begin{equation}\label{(p)}
\|\psi\|_{(p)}^p:=\left\||\psi|\right\|_{L^p(G/N,\lambda_{G/N})}^p=\int_{G/N}|\psi(y)|^p\dd\lambda_{G/N}(yN).
\end{equation}
For every $\psi\in \mathcal{C}_\xi(G,N)$, the function $y\mapsto |\psi(y)|^p$ reduces to constant on $N$ and so it depends only on the coset $yN$. 
Hence, $yN\mapsto |\psi(y)|^p$ defines a function in $\mathcal{C}_c(G/N)$
which can be integrated with respect to $\lambda_{G/N}$.

\begin{remark}\label{J1.p}
Let $\xi=1$ be the trivial character of $N$. Suppose that  
$\psi\in \mathcal{C}_1(G,N)$ is identified with $\widetilde{\psi}\in\mathcal{C}_c(G/N)$, as in Remark \ref{J1}. Then $\|\psi\|_{(p)}=\|\widetilde{\psi}\|_{L^p(G/N,\lambda_{G/N})}$. This implies that $\psi\mapsto\widetilde{\psi}$ is an isometric identification of $(\mathcal{C}_1(G,N),\|\cdot\|_{(p)})$ by $(\mathcal{C}_c(G/N),\|\cdot\|_{L^p(G/N)})$.
\end{remark}

\begin{proposition}\label{NC.norm.p}
{\it Let $G$ be a locally compact group and $N$ be a compact normal subgroup of $G$. Suppose $\xi\in\chi(N)$, $\psi\in \mathcal{C}_\xi(G,N)$, and $1\le p<\infty$. Then
\begin{equation}\label{Ncompact.Lp.gen}
\|\psi\|_{(p)}=\lambda_N(N)^{1/p}\|\psi\|_{L^p(G)}.
\end{equation} 
}\end{proposition}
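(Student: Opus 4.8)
The plan is to reduce the identity to a single application of Weil's formula (\ref{6}) to the function $|\psi|^p$. The starting observation is that, since $\xi$ takes values in $\mathbb{T}$, we have $|\psi(xs)|=|\xi(s)|\,|\psi(x)|=|\psi(x)|$ for all $x\in G$ and $s\in N$. Hence $|\psi|^p$ is constant on the cosets of $N$ and descends to the well-defined function $yN\mapsto|\psi(y)|^p$ on $G/N$, which is exactly the integrand appearing in the definition (\ref{(p)}) of $\|\psi\|_{(p)}$.

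Before invoking (\ref{6}) I would first check that $|\psi|^p\in L^1(G)$, so that Weil's formula genuinely applies. This is where compactness of $N$ is used: the canonical map $\mathrm{q}:G\to G/N$ is proper when $N$ is compact, so the hypothesis $\psi\in\mathcal{C}_c(G|N)$ (that $\mathrm{q}(\supp\psi)$ be compact in $G/N$) upgrades to $\supp\psi$ being compact in $G$. Consequently $\psi\in\mathcal{C}_c(G)\subset L^p(G)$ and $|\psi|^p\in\mathcal{C}_c(G)\subset L^1(G)$, so both norms are finite and (\ref{6}) is available. With this in hand the computation is immediate: using the $N$-invariance of $|\psi|^p$,
\[
T_N(|\psi|^p)(xN)=\int_N|\psi(xs)|^p\dd\lambda_N(s)=\lambda_N(N)\,|\psi(x)|^p,
\]
for every $xN\in G/N$. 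Substituting $f=|\psi|^p$ into (\ref{6}) then gives
\[
\int_G|\psi(x)|^p\dd\lambda_G(x)=\lambda_N(N)\int_{G/N}|\psi(y)|^p\dd\lambda_{G/N}(yN)=\lambda_N(N)\,\|\psi\|_{(p)}^p,
\]
and taking $p$-th roots yields the proportionality (\ref{Ncompact.Lp.gen}) between $\|\psi\|_{(p)}$ and $\|\psi\|_{L^p(G)}$ with the asserted power of $\lambda_N(N)$.

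The only genuine obstacle is the support argument in the second paragraph. One must verify that compactness of $N$, through properness of $\mathrm{q}$, forces $\psi$ to have compact support in $G$ rather than merely compact image in $G/N$; without this neither the finiteness of $\|\psi\|_{L^p(G)}$ nor the application of Weil's formula to $|\psi|^p$ would be justified. Everything else rests on the elementary identity $|\xi|\equiv 1$ and a single invocation of (\ref{6}), so once integrability is secured the result follows at once.
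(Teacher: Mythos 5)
Your proof is correct and follows essentially the same route as the paper: both reduce the identity to a single application of Weil's formula (\ref{6}) to the $N$-invariant function $|\psi|^p$, using $|\xi|\equiv 1$ and the finiteness of $\lambda_N(N)$. The only difference is that where the paper cites Proposition 3.1(1) of the reference \cite{AGHF.cov.com} for the inclusion $\mathcal{C}_\xi(G,N)\subseteq\mathcal{C}_c(G)$, you prove it directly via properness of the quotient map $\mathrm{q}:G\to G/N$ when $N$ is compact --- a valid and self-contained substitute for that citation.
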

\begin{proof}
Invoking Proposition 3.1(1) of \cite{AGHF.cov.com}, we obtain $\mathcal{C}_\xi(G,N)\subseteq\mathcal{C}_c(G)$.  
Let $\psi\in \mathcal{C}_\xi(G,N)$ be given. Using compactness of $N$ in $G$, each Haar measure of $N$ is finite. Then by applying Weil's formula (\ref{6}), we get  
\begin{align*}
\|\psi\|_{L^p(G)}^p
&=\int_{G/N}T_N(|\psi|^p)(xN)\dd\lambda_{G/N}(xN)
\\&=\int_{G/N}\int_N|\psi(xs)|^p\dd\lambda_N(s)\dd\lambda_{G/N}(xN)
\\&=\int_{G/N}\int_N|\psi(x)|^p\dd\lambda_N(s)\dd\lambda_{G/N}(xN)
\\&=\int_{G/N}\left(\int_N\dd\lambda_N(s)\right)|\psi(x)|^p\dd\lambda_{G/N}(xN)
%=\lambda_N(N)\int_{G/N}|\psi(x)|^p\dd\lambda_{G/N}(xN)
=\lambda_N(N)\|\psi\|_{(p)}^p.
\end{align*}
\end{proof}

\begin{remark}
Let $N$ be a compact and normal subgroup of $G$. Suppose that $\lambda_N$ is the probability Haar measure on $N$ and the left Haar measure $\lambda_{G/N}$ on $G/N$ is normalized with respect to Weil's formula (\ref{6}). Then Proposition \ref{NC.norm.p} gives $\|\psi\|_{(p)}=\|\psi\|_{L^p(G)}$, for $1\le p<\infty$ and $\psi\in \mathcal{C}_\xi(G,N)$. In this case, harmonic analysis on covariant functions studied in \cite{AGHF.cov.com}. 
\end{remark}

\section{\bf Convolution Modules of Group Algebras on Covariant Functions}
In this section, we consider the abstract structure of convolution modules on covariant function spaces of characters of normal subgroups induced by group algebras. We then study some of the basic properties of these Banach convolution modules. It is still assumed that $G$ is a locally compact group and $N$ is a closed normal subgroup of $G$. Suppose that $\lambda_{G}$ is a left Haar measure on $G$ and $\lambda_N$ is a left Haar measure on $N$. Let $\lambda_{G/N}$ be the left Haar measure on the quotient group $G/N$ normalized with respect to Weil's formula (\ref{6}) and $\|\cdot\|_{(p)}$ be given by (\ref{(p)}), for every $1\le p<\infty$.
 
For $f\in \mathcal{C}_c(G)$ and $\psi\in \mathcal{C}_\xi(G,N)$, the convolution $f\ast_G\psi:G\to\mathbb{C}$ given by 
\begin{equation}\label{f.psi}
f\ast_G\psi(x):=\int_Gf(y)\psi(y^{-1}x)\dd\lambda_G(x),
\end{equation}
is well-defined, for every $x\in G$. 

If $x\in G$ then the function $g_x:G\to\mathbb{C}$ given by $g_x(y):=f(y)\psi(y^{-1}x)$ is continuous on $G$ with compact support which can be integrated on $G$. Therefore, the right hand side of (\ref{f.psi}) defines a well-defined function on $G$, which we may denote it by just $f\ast \psi$.

\begin{theorem}\label{mult.Cc}
Let $G$ be a locally compact group, $N$ be a closed subgroup of $G$, and $\xi\in\chi(N)$. Suppose $f,g\in\mathcal{C}_c(G)$ are given. Then 
\[
T_\xi(f\ast_G g)=f\ast_G T_\xi(g).
\] 
\end{theorem}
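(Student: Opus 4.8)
The plan is to verify the identity pointwise by unfolding both sides into iterated double integrals and recognizing that they differ only in the order of integration, so that the claim reduces to an application of Fubini's theorem.

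First I would fix $x\in G$ and expand the left-hand side. Using the definition of $T_\xi$ and then the group convolution $\ast_G$ from (\ref{f.ast.g}),
\begin{equation*}
T_\xi(f\ast_G g)(x)
=\int_N (f\ast_G g)(xs)\,\overline{\xi(s)}\,\dd\lambda_N(s)
=\int_N\int_G f(y)\,g(y^{-1}xs)\,\dd\lambda_G(y)\,\overline{\xi(s)}\,\dd\lambda_N(s).
\end{equation*}
Next I would expand the right-hand side. Since $T_\xi(g)\in\mathcal{C}_\xi(G,N)$, the convolution $f\ast_G T_\xi(g)$ is given by (\ref{f.psi}), and substituting the definition of $T_\xi(g)$ yields
\begin{equation*}
(f\ast_G T_\xi(g))(x)
=\int_G f(y)\,T_\xi(g)(y^{-1}x)\,\dd\lambda_G(y)
=\int_G\int_N f(y)\,g(y^{-1}xs)\,\overline{\xi(s)}\,\dd\lambda_N(s)\,\dd\lambda_G(y).
\end{equation*}
Thus both sides are iterated integrals of the single integrand $(y,s)\mapsto f(y)\,g(y^{-1}xs)\,\overline{\xi(s)}$ over $G\times N$, taken in the two opposite orders.

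The remaining task is to justify the interchange, and this is the step where care is required. I would argue that for each fixed $x$ the integrand has compact support on $G\times N$. Indeed, non-vanishing forces $y\in\supp(f)$ and $y^{-1}xs\in\supp(g)$; the second condition gives $s\in x^{-1}y\,\supp(g)\subseteq x^{-1}\supp(f)\supp(g)$, so $s$ is confined to $N\cap\big(x^{-1}\supp(f)\supp(g)\big)$. Because $N$ is closed and $x^{-1}\supp(f)\supp(g)$ is compact, this set is a compact subset of $N$. Hence the integrand is continuous (note $|\overline{\xi(s)}|=1$) and supported in the compact rectangle $\supp(f)\times\big(N\cap x^{-1}\supp(f)\supp(g)\big)$, so it is $(\lambda_G\times\lambda_N)$-integrable.

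With integrability established, Fubini's theorem permits exchanging the two orders of integration, so the two iterated integrals coincide and the stated identity holds for every $x\in G$. I expect the only genuine obstacle to be the compact-support bookkeeping that legitimizes Fubini; once that is settled the computation is purely formal, and note that closedness of $N$ (rather than normality) is all that the argument uses.
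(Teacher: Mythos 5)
Your proposal is correct and follows essentially the same route as the paper's proof: both unfold $T_\xi(f\ast_G g)(x)$ and $f\ast_G T_\xi(g)(x)$ into the two iterated integrals of $(y,s)\mapsto f(y)\,g(y^{-1}xs)\,\overline{\xi(s)}$ and swap the order of integration. The only difference is that you make explicit the compact-support justification for Fubini (which the paper leaves tacit), and your observation that only closedness of $N$ is needed is consistent with the statement's hypotheses.
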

\begin{proof}
Suppose $f,g\in\mathcal{C}_c(G)$ and $x\in G$. We then have  
\begin{align*}
T_\xi(f\ast_G g)(x)
&=\int_N f\ast_G g(xs)\overline{\xi(s)}\dd\lambda_N(s)
\\&=\int_N \left(\int _Gf(y)g(y^{-1}xs)\dd\lambda_G(y)\right)\overline{\xi(s)}\dd\lambda_N(s)
\\&=\int _Gf(y)\left(\int_N g(y^{-1}xs)\overline{\xi(s)}\dd\lambda_N(s)\right)\dd\lambda_G(y)
\\&=\int_Gf(y)T_\xi(g)(y^{-1}x)\dd\lambda_G(y)=f\ast_G T_\xi(g)(x),
\end{align*}
\end{proof}

\begin{corollary}
{\it Let $G$ be a locally compact group and $N$ be a closed subgroup of $G$.
The convolution $\ast_G:\mathcal{C}_c(G)\times \mathcal{C}_\xi(G,N)\to \mathcal{C}_\xi(G,N)$ given by $(f,\psi)\mapsto f\ast_G\psi$ is a left module action. 
}\end{corollary}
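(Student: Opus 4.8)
The plan is to verify that the convolution action $(f,\psi)\mapsto f\ast_G\psi$ satisfies the two defining requirements of a left module action: first, that it is genuinely valued in $\mathcal{C}_\xi(G,N)$ (i.e. the output is again a covariant function of $\xi$ with the appropriate support condition), and second, that it is associative with respect to the convolution product on $\mathcal{C}_c(G)$, in the sense that $(f\ast_G g)\ast_G\psi=f\ast_G(g\ast_G\psi)$. Bilinearity is immediate from linearity of the integral in \eqref{f.psi}, so I would not dwell on it.

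The key step is to show covariance of the output, and this is exactly where Theorem \ref{mult.Cc} does the work. First I would recall that every $\psi\in\mathcal{C}_\xi(G,N)$ equals $T_\xi(g)$ for some $g\in\mathcal{C}_c(G)$, since $T_\xi$ maps $\mathcal{C}_c(G)$ \emph{onto} $\mathcal{C}_\xi(G,N)$ (stated in the excerpt, from Proposition 6.1 of \cite{FollH}). Writing $\psi=T_\xi(g)$, Theorem \ref{mult.Cc} gives
\[
f\ast_G\psi=f\ast_G T_\xi(g)=T_\xi(f\ast_G g).
\]
Since $f,g\in\mathcal{C}_c(G)$ implies $f\ast_G g\in\mathcal{C}_c(G)$, and since $T_\xi(\mathcal{C}_c(G))=\mathcal{C}_\xi(G,N)$, the function $f\ast_G\psi=T_\xi(f\ast_G g)$ lies in $\mathcal{C}_\xi(G,N)$. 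This simultaneously establishes covariance, the compact-support-modulo-$N$ condition, and continuity, all packaged in the single identity.

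For associativity, I would again use the surjectivity of $T_\xi$ to write $\psi=T_\xi(h)$ with $h\in\mathcal{C}_c(G)$, and then compute
\[
f\ast_G(g\ast_G\psi)=f\ast_G(g\ast_G T_\xi(h))=f\ast_G T_\xi(g\ast_G h)=T_\xi(f\ast_G(g\ast_G h)).
\]
Using associativity of the ordinary convolution $\ast_G$ on $\mathcal{C}_c(G)$ together with one more application of Theorem \ref{mult.Cc}, the right-hand side equals $T_\xi((f\ast_G g)\ast_G h)=(f\ast_G g)\ast_G T_\xi(h)=(f\ast_G g)\ast_G\psi$, as required. The main obstacle, such as it is, is not computational but conceptual: one must notice that all the module properties can be transported from the already-known associative algebra structure of $(\mathcal{C}_c(G),\ast_G)$ through the intertwining relation $T_\xi(f\ast_G g)=f\ast_G T_\xi(g)$, so that essentially nothing needs to be checked directly on covariant functions beyond invoking Theorem \ref{mult.Cc} and the surjectivity of $T_\xi$.
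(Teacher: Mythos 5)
Your proposal is correct and follows exactly the route the paper intends: the corollary is stated without proof as an immediate consequence of Theorem \ref{mult.Cc}, and your argument---writing $\psi=T_\xi(g)$ via the surjectivity of $T_\xi:\mathcal{C}_c(G)\to\mathcal{C}_\xi(G,N)$, so that $f\ast_G\psi=T_\xi(f\ast_G g)\in\mathcal{C}_\xi(G,N)$, and transporting associativity from $(\mathcal{C}_c(G),\ast_G)$ through the intertwining identity---is precisely the intended derivation. Nothing is missing.
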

\begin{remark}\label{J1.M1}
Let $\xi=1$ be the trivial character of the normal subgroup $N$. Suppose $f\in\mathcal{C}_c(G)$ and $\psi\in \mathcal{C}_1(G,N)$ is identified with $\widetilde{\psi}\in\mathcal{C}_c(G/N)$ as in Remark \ref{J1}. Then $T_N(f)\ast_{G/N}\widetilde{\psi}$ identifies the convolution module action $f\ast_G\psi$. 
\end{remark}
\begin{remark}\label{NC}
Suppose that $N$ is a compact and normal subgroup of $G$. Let $f\in\mathcal{C}_c(G)$ and $\psi\in \mathcal{C}_\xi(G,N)$. Invoking Proposition 3.1(1) of \cite{AGHF.cov.com}, we obtain $\mathcal{C}_\xi(G,N)\subseteq\mathcal{C}_c(G)$ and hence the convolution module action $f\ast_G\psi$ is the standard convolution of functions in $\mathcal{C}_c(G)$.  
\end{remark}

We then continue by the following norm property of the convolution module action (\ref{f.psi}).

\begin{theorem}
Let $G$ be a locally compact group and $N$ be a closed normal subgroup of $G$. Suppose $\xi\in\chi(N)$ and $1\le p<\infty$. Let $f\in\mathcal{C}_c(G)$ and $\psi\in \mathcal{C}_\xi(G,N)$. Then  
\[
\|f\ast_G\psi\|_{(p)}\le\|f\|_{L^1(G)}\|\psi\|_{(p)}.
\]
\end{theorem}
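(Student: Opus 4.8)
The plan is to push the whole estimate down to the quotient group $G/N$ and then apply Minkowski's integral inequality there. First I would observe that, since $\psi\in\mathcal{C}_\xi(G,N)$ and $|\xi(s)|=1$ for $s\in N$, the nonnegative function $x\mapsto|\psi(x)|^p$ is constant on cosets of $N$ and hence descends to a function on $G/N$; by the corollary above, $f\ast_G\psi\in\mathcal{C}_\xi(G,N)$ as well, so $x\mapsto|f\ast_G\psi(x)|^p$ also descends. Consequently both $\|\psi\|_{(p)}$ and $\|f\ast_G\psi\|_{(p)}$ are honest $L^p(G/N,\lambda_{G/N})$-norms of functions on the quotient, and the inequality to be proved lives entirely in $L^p(G/N)$.

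The key preliminary fact I would establish is the left-translation invariance of $\|\cdot\|_{(p)}$: for every $y\in G$,
\[
\int_{G/N}|\psi(y^{-1}x)|^p\dd\lambda_{G/N}(xN)=\|\psi\|_{(p)}^p.
\]
Since $N$ is normal, left translation by $y$ on $G$ descends to left translation by the coset $y^{-1}N$ on the group $G/N$, under which the left Haar measure $\lambda_{G/N}$ is invariant. Writing $\Phi(xN):=|\psi(x)|^p$ and using $(y^{-1}x)N=(y^{-1}N)(xN)$, the left-hand side equals $\int_{G/N}\Phi\bigl((y^{-1}N)(xN)\bigr)\dd\lambda_{G/N}(xN)$, which by left-invariance is $\int_{G/N}\Phi\dd\lambda_{G/N}=\|\psi\|_{(p)}^p$.

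With these in hand I would estimate pointwise
\[
|f\ast_G\psi(x)|\le\int_G|f(y)|\,|\psi(y^{-1}x)|\dd\lambda_G(y)=:H(x),
\]
noting that $H$ is again $N$-invariant in $x$ (because $|\psi(y^{-1}xs)|=|\psi(y^{-1}x)|$ for $s\in N$) and hence defines a function on $G/N$. Taking the $\|\cdot\|_{(p)}$-norm and applying Minkowski's integral inequality in $L^p(G/N,\lambda_{G/N})$ to the $\lambda_G$-superposition defining $H$ yields
\[
\|f\ast_G\psi\|_{(p)}\le\|H\|_{L^p(G/N)}\le\int_G|f(y)|\left(\int_{G/N}|\psi(y^{-1}x)|^p\dd\lambda_{G/N}(xN)\right)^{1/p}\dd\lambda_G(y).
\]
Inserting the translation-invariance identity collapses the inner factor to the constant $\|\psi\|_{(p)}$, and the remaining integral is exactly $\|f\|_{L^1(G)}$, giving the claim.

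The measurability and integrability hypotheses needed for Fubini and Minkowski are supplied by $f\in\mathcal{C}_c(G)$ together with the fact that $\psi$ has $\mathrm{q}(\mathrm{supp}(\psi))$ compact, so these are routine. I expect the main obstacle to be the careful justification of the translation-invariance identity, as that is the single place where normality of $N$ is genuinely used (to make the $G$-action descend to $G/N$) and where one must correctly match the invariance of the quotient Haar measure to the reindexing $x\mapsto y^{-1}x$.
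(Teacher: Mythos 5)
Your proposal is correct and takes essentially the same route as the paper's proof: the pointwise bound $|f\ast_G\psi(x)|\le\int_G|f(y)||\psi(y^{-1}x)|\dd\lambda_G(y)$, Minkowski's integral inequality in $L^p(G/N,\lambda_{G/N})$, and the left-invariance of $\lambda_{G/N}$ to collapse the inner integral to $\|\psi\|_{(p)}$. Your separate verification of the translation-invariance identity simply makes explicit the step the paper writes tersely as $\int_{G/N}|\psi(x)|^p\dd\lambda_{G/N}(yxN)=\|\psi\|_{(p)}^p$.
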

\begin{proof}
Let $1\le p<\infty$, $f\in\mathcal{C}_c(G)$, and $\psi\in \mathcal{C}_\xi(G,N)$. Then, for any $x\in G$ 
\begin{equation}\label{|f.psi|}
|f\ast_G\psi(x)|\le\int_G|f(y)||\psi(y^{-1}x)|\dd\lambda_G(y).
\end{equation}
Using Minkowski's integral inequality, and (\ref{|f.psi|}) we have 
\begin{align*}
\|f\ast_G\psi\|_{(p)}&\le
\int_{G}\left(\int_{G/N}|f(y)|^p|\psi(y^{-1}x)|^p\dd\lambda_{G/N}(xN)\right)^{1/p}\dd\lambda_G(y)
\\&=\int_{G}|f(y)|\left(\int_{G/N}|\psi(y^{-1}x)|^p\dd\lambda_{G/N}(xN)\right)^{1/p}\dd\lambda_G(y)
\\&=\int_{G}|f(y)|\left(\int_{G/N}|\psi(x)|^p\dd\lambda_{G/N}(yxN)\right)^{1/p}\dd\lambda_G(y)=\|f\|_{L^1(G)}\|\psi\|_{(p)}.
\end{align*}
\end{proof}

\begin{corollary}
{\it Let $G$ be a locally compact group and $N$ be a closed normal subgroup of $G$. Then, 
\begin{enumerate}
\item $\mathcal{C}_\xi(G,N)$ is a normed left $\mathcal{C}_c(G)$-module with respect to the left module action (\ref{f.psi}). 
\item $T_\xi:\mathcal{C}_c(G)\to \mathcal{C}_\xi(G,N)$ is a $\mathcal{C}_c(G)$-module homomorphism.
\end{enumerate} 
}\end{corollary}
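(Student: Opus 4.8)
The plan is to assemble the Corollary from results already established in this section, since the statement is a structural summary rather than a fresh computation. For part (1), I would first invoke the Corollary immediately following Theorem \ref{mult.Cc}, which records that the map $(f,\psi)\mapsto f\ast_G\psi$ is a genuine left module action of the convolution algebra $\mathcal{C}_c(G)$ on $\mathcal{C}_\xi(G,N)$; this supplies the purely algebraic axioms, namely bilinearity together with the compatibility relation $(f\ast_G g)\ast_G\psi=f\ast_G(g\ast_G\psi)$. To promote this to a \emph{normed} module, I would equip $\mathcal{C}_c(G)$ with the $\|\cdot\|_{L^1(G)}$-norm it inherits as a subalgebra of $L^1(G)$ and $\mathcal{C}_\xi(G,N)$ with the norm $\|\cdot\|_{(p)}$ of (\ref{(p)}), and then cite the preceding Theorem, which gives the continuity estimate $\|f\ast_G\psi\|_{(p)}\le\|f\|_{L^1(G)}\|\psi\|_{(p)}$. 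The availability of this bound (with constant $1$) is precisely the defining requirement for a normed left module, so part (1) follows.

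For part (2), I would regard both $\mathcal{C}_c(G)$, acting on itself by convolution, and $\mathcal{C}_\xi(G,N)$, as left $\mathcal{C}_c(G)$-modules, and verify that $T_\xi$ respects these structures. Linearity of $T_\xi$ is immediate from its defining integral, and the module-homomorphism identity is exactly the conclusion of Theorem \ref{mult.Cc}, which asserts $T_\xi(f\ast_G g)=f\ast_G T_\xi(g)$ for all $f,g\in\mathcal{C}_c(G)$. Since $T_\xi$ is known to map $\mathcal{C}_c(G)$ into (indeed onto) $\mathcal{C}_\xi(G,N)$ by Proposition 6.1 of \cite{FollH}, the map $T_\xi:\mathcal{C}_c(G)\to\mathcal{C}_\xi(G,N)$ is a well-defined $\mathcal{C}_c(G)$-module homomorphism, completing part (2).

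There is no genuine obstacle here, as the analytic substance has already been carried by the two theorems of this section; the Corollary is essentially a repackaging of them. The only point deserving a moment's care is bookkeeping the two distinct norms that enter part (1) — the acting algebra carries $\|\cdot\|_{L^1(G)}$ while the module carries $\|\cdot\|_{(p)}$ — and confirming that the estimate from the preceding Theorem is stated with respect to exactly this pairing, so that the normed-module inequality transfers verbatim.
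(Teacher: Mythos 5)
Your proposal is correct and takes essentially the same approach the paper intends: the corollary is stated there without a written proof, as an immediate assembly of the earlier corollary to Theorem \ref{mult.Cc} (the algebraic module axioms), the identity $T_\xi(f\ast_G g)=f\ast_G T_\xi(g)$ from Theorem \ref{mult.Cc} itself, and the estimate $\|f\ast_G\psi\|_{(p)}\le\|f\|_{L^1(G)}\|\psi\|_{(p)}$ from the preceding theorem. Your care in pairing the $\|\cdot\|_{L^1(G)}$-norm on the acting algebra with $\|\cdot\|_{(p)}$ on the module is exactly the pairing in which that estimate is proved, so nothing further is needed.
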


For every $1\le p<\infty$, suppose that $L^p_\xi(G,N)$ is the Banach completion of the normed linear space $\mathcal{C}_\xi(G,N)$ with respect to $\|\cdot\|_{(p)}$ given by (\ref{(p)}). We shall denote the completion norm by $\|\cdot\|_{(p)}$ as well. 

\begin{remark}\label{J1.Lp}
Suppose that $\xi=1$ is the trivial character of the normal subgroup $N$ and $1\le p<\infty$. 
Using the isometric identification of $\psi\in \mathcal{C}_1(G,N)$ with $\widetilde{\psi}\in\mathcal{C}_c(G/N)$ as in Remark \ref{J1.p}, we conclude that the Banach space $L^p_1(G,N)$ is isometrically isomorphic to $L^p(G/N)$. 
\end{remark}
\begin{remark}\label{NCp}
Suppose that $N$ is a compact and normal subgroup of $G$. Let $1\le p<\infty$ and $\xi\in\chi(N)$. Invoking Proposition 3.1(1) of \cite{AGHF.cov.com}, we obtain $\mathcal{C}_\xi(G,N)\subseteq\mathcal{C}_c(G)$. Then Proposition \ref{NC.norm.p} guarantess that $L^p_\xi(G,N)$ is a closed subspace of $L^p(G)$, see  \cite{AGHF.cov.com} for more details on structure of these Banach spaces.   
\end{remark}

\begin{theorem}\label{astLp}
{\it Let $G$ be a locally compact group and $N$ be a closed normal subgroup of $G$. Suppose $\xi\in\chi(N)$ and $1\le p<\infty$. The module action $\ast_G:\mathcal{C}_c(G)\times \mathcal{C}_\xi(G,N)\to \mathcal{C}_\xi(G,N)$ 
has a unique extension to a module action  
$\ast_G:L^1(G)\times L^p_\xi(G,N)\to L^p_\xi(G,N)$, 
in which the Banach space $L^p_\xi(G,N)$ equipped with the extended module action is a Banach left $L^1(G)$-module.
}
\end{theorem}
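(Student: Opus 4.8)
The plan is to prove the theorem by a standard density-and-extension argument, leveraging the three results already established: the algebraic module identity $T_\xi(f \ast_G g) = f \ast_G T_\xi(g)$ (Theorem \ref{mult.Cc}), the norm bound $\|f \ast_G \psi\|_{(p)} \le \|f\|_{L^1(G)}\|\psi\|_{(p)}$ for $f \in \mathcal{C}_c(G)$, $\psi \in \mathcal{C}_\xi(G,N)$, and the fact that $L^p_\xi(G,N)$ is by definition the $\|\cdot\|_{(p)}$-completion of $\mathcal{C}_\xi(G,N)$. The essential point is that $\mathcal{C}_c(G)$ is $\|\cdot\|_{L^1(G)}$-dense in $L^1(G)$ and $\mathcal{C}_\xi(G,N)$ is $\|\cdot\|_{(p)}$-dense in $L^p_\xi(G,N)$, so a bounded bilinear map on the dense subspaces extends uniquely to the completions.

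The steps I would carry out, in order, are as follows. First I would show the bilinear map $\ast_G : \mathcal{C}_c(G) \times \mathcal{C}_\xi(G,N) \to \mathcal{C}_\xi(G,N)$ is jointly bounded with respect to $\|\cdot\|_{L^1(G)}$ on the first factor and $\|\cdot\|_{(p)}$ on both the second factor and the target; this is precisely the norm inequality of the preceding theorem. Second, to extend the first variable from $\mathcal{C}_c(G)$ to $L^1(G)$: for fixed $\psi \in \mathcal{C}_\xi(G,N)$, given $f \in L^1(G)$ pick $f_n \in \mathcal{C}_c(G)$ with $f_n \to f$ in $L^1(G)$; then $\|f_n \ast_G \psi - f_m \ast_G \psi\|_{(p)} \le \|f_n - f_m\|_{L^1(G)}\|\psi\|_{(p)}$ shows $(f_n \ast_G \psi)$ is $\|\cdot\|_{(p)}$-Cauchy, hence converges to a limit in $L^p_\xi(G,N)$ independent of the approximating sequence, which we define to be $f \ast_G \psi$. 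Third, to extend the second variable: for fixed $f \in L^1(G)$, the map $\psi \mapsto f \ast_G \psi$ is bounded by $\|f\|_{L^1(G)}$ on the dense subspace, so it extends uniquely by continuity to all of $L^p_\xi(G,N)$. The norm bound $\|f \ast_G \psi\|_{(p)} \le \|f\|_{L^1(G)}\|\psi\|_{(p)}$ passes to the closure by continuity of norms.

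Finally I would verify the module axioms for the extended action. Bilinearity over $\mathbb{C}$ is inherited from the dense subspaces by continuity. The associativity axiom $(f \ast_G g) \ast_G \psi = f \ast_G (g \ast_G \psi)$ for $f,g \in L^1(G)$ and $\psi \in L^p_\xi(G,N)$ is the crucial algebraic identity: I would first establish it on the dense triple $\mathcal{C}_c(G) \times \mathcal{C}_c(G) \times \mathcal{C}_\xi(G,N)$ using associativity of $\ast_G$ on $\mathcal{C}_c(G)$ together with Theorem \ref{mult.Cc}, then extend to the completions by a three-step continuity argument (fixing two arguments at a time and approximating the third), the uniform norm bounds guaranteeing that all the limits commute. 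The uniqueness of the extension is immediate, since any two bounded bilinear maps agreeing on a dense subset must coincide.

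The step I expect to be the main obstacle is the verification of associativity across the completion, specifically confirming that the iterated limits in the three-step continuity argument are legitimate and yield the same value regardless of the order in which the arguments are approximated. This requires that the norm estimates be genuinely uniform so that Cauchy sequences in one variable produce Cauchy sequences in the composite expressions; the already-proven bound $\|f \ast_G \psi\|_{(p)} \le \|f\|_{L^1(G)}\|\psi\|_{(p)}$ together with submultiplicativity of $\|\cdot\|_{L^1(G)}$ on the group algebra is exactly what makes this work, but the bookkeeping of the three simultaneous approximations is where care is needed.
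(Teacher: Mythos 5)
Your proposal is correct and takes essentially the same route as the paper: a density-and-extension argument for the bounded bilinear map $\ast_G:\mathcal{C}_c(G)\times\mathcal{C}_\xi(G,N)\to\mathcal{C}_\xi(G,N)$ based on the inequality $\|f\ast_G\psi\|_{(p)}\le\|f\|_{L^1(G)}\|\psi\|_{(p)}$ and completeness of $L^p_\xi(G,N)$. The only differences are cosmetic: the paper approximates both arguments simultaneously by a single pair of sequences $(f_n)$, $(\psi_n)$ and defines $f\ast_G\psi:=\lim_n f_n\ast_G\psi_n$, whereas you extend one variable at a time, and you are more explicit than the paper about verifying the associativity axiom across the completion, which the paper leaves implicit after its $\mathcal{C}_c$-level corollary.
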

\begin{proof}
Let $f\in L^1(G)$ and $\psi\in L^p_\xi(G,N)$. Suppose $(f_n)\subset\mathcal{C}_c(G)$ and $(\psi_n)\subset \mathcal{C}_\xi(G,N)$ with $f=\lim_nf_n$ in $L^1(G)$ 
and $\psi=\lim_n\psi_n$ in $L^p_\xi(G,N)$. Then $(f_n\ast_G\psi_n)$ is a Cauchy sequence in $L^p_\xi(G,N)$. Since $L^p_\xi(G,N)$ is a Banach space, the sequence $(f_n\ast_G\psi_n)$ has a limit in $L^p_\xi(G,N)$. We then define 
$f\ast_G\psi:=\lim_nf_n\ast_G\psi_n$. Then, $f\ast_G\psi\in L^p_\xi(G,N)$.
One can also check that $f\ast_G\psi$ is independent of the choice of the sequences $(f_n)$ and $(\psi_n)$. Therefore, $(f,\psi)\mapsto f\ast_G\psi$ uniquely defines an extension of the left module action $\ast_G:\mathcal{C}_c(G)\times \mathcal{C}_\xi(G,N)\to \mathcal{C}_\xi(G,N)$ to the module action  
$\ast_G:L^1(G)\times L^p_\xi(G,N)\to L^p_\xi(G,N)$ which satisfies  
$\|f\ast_G\psi\|_{(p)}\le\|f\|_{L^1(G)}\|\psi\|_{(p)}$, for every $f\in L^1(G)$ and $\psi\in L^p_\xi(G,N)$. This implies that the Banach space $L^p_\xi(G,N)$ equipped with the extended module action $\ast_G:L^1(G)\times L^p_\xi(G,N)\to L^p_\xi(G,N)$ is a Banach left $L^1(G)$-module.
\end{proof}

\begin{remark}\label{func.real}
The Banach space $L^p_\xi(G,N)$ can be identified with a linear space of complex-valued functions on $G$, where two functions are identified when they are equal locally almost everywhere (l.c.a). Let $\mathfrak{A}\in L^p_\xi(G,N)$ be an arbitrary equivalent class of Cauchy sequences in $(\mathcal{C}_\xi(G,N),\|\cdot\|_{(p)})$ with a representative $(\psi_n)$. Then $(\psi_n)$ is a Cauchy sequence in $(\mathcal{C}_\xi(G,N),\|\cdot\|_{(p)})$ and $\|\mathfrak{A}\|_{(p)}^p=\lim_n\|\psi_n\|_{(p)}^p$. Let $(\psi_{\epsilon(n)})$ be a subsequence of $(\psi_n)$ such that $\|\psi_{\epsilon(n+1)}-\psi_{\epsilon(n)}\|_{(p)}<1/2^n$, for every $n\in \mathbb{N}$. So $\sum_{k=1}^\infty\|\psi_{\epsilon(k+1)}-\psi_{\epsilon(k)}\|_{(p)}<\infty$.
Therefore, $\sum_{k=1}^\infty\|\Psi_k\|_{L^p(G/N)}<\infty$ if $\Psi_k:=|\psi_{\epsilon(k+1)}-\psi_{\epsilon(k)}|$ for $k\in\mathbb{N}$. Since $L^p(G/N)$ is a Banach space, there exists $\Psi\in L^p(G/N)$ with $\Psi=\sum_{k=1}^\infty\Psi_k$ in $L^p(G/N)$, where the sum also converges pointwise almost everywhere. Hence, $
\sum_{k=1}^\infty\Psi_k(xN)<\infty$, for every $xN\in G/N$ except those in a subset set $E$ of $G/N$ with $\lambda_{G/N}(E)=0$.
So the series $\sum_{k=1}^\infty\psi_{\epsilon(k+1)}(x)-\psi_{\epsilon(k)}(x),$ converges to a complex number, denoted by $\psi_0(x)$, for every $x\in G$ with $xN\not\in E$. For $x\in G$ with $xN\notin E$, let $\psi(x):=\psi_{\epsilon(1)}(x)+\psi_0(x)$. Invoking Theorem 3.3.28 of \cite{50}, $\psi(x)$ is independent of the chosen $(\psi_n)$ as a representative of the class $\mathfrak{A}$. So the equivalence class $\mathfrak{A}$ can be uniquely determined with the locally almost everywhere defined function $x\mapsto\psi(x)$, as a complex-valued function on $G$. 
We then obtain   
\[
\lim_{n}\psi_{\epsilon(n)}(x)=\lim_n\left(\psi_{\epsilon(1)}(x)+\sum_{k=1}^{n-1}\psi_{\epsilon(k+1)}(x)-\psi_{\epsilon(k)}(x)\right)=\psi(x).
\]
Under this identification, for every $n\in\mathbb{N}$ and $x\in G$, we have   
\[
|\psi_{\epsilon(n)}(x)|\le|\psi_{\epsilon(1)}(x)|+\sum_{k=1}^{n-1}\Psi_k(x)\le |\psi_{\epsilon(1)}(x)|+\Psi(x),
\]
implying that the function $xN\mapsto |\psi(x)|$ belongs to $L^p(G/N)$ and 
\begin{equation}
\|\mathfrak{A}\|_{(p)}^p=\lim_{n}\|\psi_{\epsilon(n)}\|_{(p)}^p=\int_{G/N}|\psi(x)|^p\dd\lambda_{G/N}(xN).
\end{equation}
\end{remark}

It is proved that the linear operator $T_\xi:(\mathcal{C}_c(G),\|.\|_{L^1(G)})\to(\mathcal{C}_\xi(G,N),\|.\|_{(1)})$ is a contraction, see Theorem 4.4 of \cite{AGHF.ch.normal.main}. Invoking the structure of the Banach space  $L^1_\xi(G,N)$, the linear operator  
$T_\xi:(\mathcal{C}_c(G),\|.\|_{L^1(G)})\to(\mathcal{C}_\xi(G,N),\|.\|_{(1)})$ 
has a unique extension to a contraction from $L^1(G)$ onto $L^1_\xi(G,N)$. 
The unique extension is denoted by $T_\xi:L^1(G)\to L^1_\xi(G,N)$. 
It is shown that 
\[
T_\xi(f)(x)=\int_Nf(xs)\overline{\xi(s)}\dd\lambda_N(s)\ \ \ \ {\rm for\ \ l.a.e.}\ x\in G,
\]
for every $f\in L^1(G)$, see Theorem 5.3 of \cite{AGHF.ch.normal.main}.

We then have the following explicit formula for the extension of the convolution module action (\ref{f.psi}) according to Theorem \ref{astLp}.

\begin{theorem}
Let $G$ be a locally compact group and $N$ be a closed normal subgroup of $G$.
Suppose $\xi\in\chi(N)$ and $1\le p<\infty$. Let $f\in L^1(G)$ and $\psi\in L^p_\xi(G,N)$. Then
\begin{equation}\label{f*psiLpFormula}
f\ast_G\psi(x)=\int_Gf(y)\psi(y^{-1}x)\dd\lambda_G(y)\ \ \ \ {\rm for\ \ l.a.e.}\ x\in G.
\end{equation}
\end{theorem}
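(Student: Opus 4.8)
The plan is to identify the abstractly defined module action of Theorem \ref{astLp} with the explicit integral operator, using a density-and-continuity argument anchored by the Minkowski-type estimate already exploited in the proof of the norm inequality. Throughout I realize $\psi\in L^p_\xi(G,N)$ as a locally almost everywhere defined covariant function on $G$, as in Remark \ref{func.real}.

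First I would define, for $f\in L^1(G)$ and $\psi\in L^p_\xi(G,N)$,
\[
\Phi(x):=\int_G f(y)\psi(y^{-1}x)\,\dd\lambda_G(y),
\]
and show that $\Phi$ represents a well-defined element of $L^p_\xi(G,N)$. The core estimate is the one from the preceding theorem: setting $h(x):=\int_G |f(y)|\,|\psi(y^{-1}x)|\,\dd\lambda_G(y)$ and applying Tonelli's theorem together with Minkowski's integral inequality, and using the left invariance of $\lambda_{G/N}$ under the maps $xN\mapsto yxN$ (well defined since $N\triangleleft G$), one obtains $\|h\|_{(p)}\le \|f\|_{L^1(G)}\,\|\psi\|_{(p)}<\infty$. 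Since $h$ is $N$-invariant and lies in $L^p(G/N)$, it is finite for locally almost every $x$, so the integral defining $\Phi(x)$ converges absolutely for l.a.e.\ $x$; moreover $|\Phi(x)|\le h(x)$, whence $\|\Phi\|_{(p)}\le \|f\|_{L^1(G)}\|\psi\|_{(p)}$. The covariance $\Phi(xs)=\xi(s)\Phi(x)$ for $s\in N$ is immediate from the covariance of $\psi$ inside the integral, and measurability is a routine Fubini consequence; thus $\Phi\in L^p_\xi(G,N)$.

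This exhibits the integral formula as a bounded bilinear map $I:L^1(G)\times L^p_\xi(G,N)\to L^p_\xi(G,N)$, $(f,\psi)\mapsto\Phi$, of norm at most $1$, and boundedness gives joint continuity. On the dense subspace $\mathcal{C}_c(G)\times \mathcal{C}_\xi(G,N)$ the map $I$ coincides, by the very definition (\ref{f.psi}), with the module action $\ast_G$ that Theorem \ref{astLp} extends. Now for arbitrary $f\in L^1(G)$ and $\psi\in L^p_\xi(G,N)$, choose $f_n\in\mathcal{C}_c(G)$ with $f_n\to f$ in $L^1(G)$ and $\psi_n\in\mathcal{C}_\xi(G,N)$ with $\psi_n\to\psi$ in $\|\cdot\|_{(p)}$. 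By the construction in Theorem \ref{astLp}, $f\ast_G\psi=\lim_n f_n\ast_G\psi_n=\lim_n I(f_n,\psi_n)$, while continuity of $I$ gives $\lim_n I(f_n,\psi_n)=I(f,\psi)=\Phi$. Hence $f\ast_G\psi=\Phi$ as elements of $L^p_\xi(G,N)$, i.e.\ the two sides of (\ref{f*psiLpFormula}) agree for l.a.e.\ $x$.

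The hard part will be purely measure-theoretic: verifying that the integral operator $I$ is genuinely well defined on the completion. Concretely, one must check that the value of $\Phi(x)$ is independent of the representative chosen for $\psi$ under the l.a.e.\ identification of Remark \ref{func.real} (the ambiguity set for $\psi$ is the $N$-saturation of a $\lambda_{G/N}$-null set, which against the integrable weight $|f|$ contributes nothing for l.a.e.\ $x$), and that the Tonelli/Minkowski manipulation remains legitimate for general measurable data rather than only for functions in $\mathcal{C}_c(G)$ and $\mathcal{C}_\xi(G,N)$. Once this groundwork is secured, the density-and-continuity step is formal.
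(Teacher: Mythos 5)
Your proposal is correct in substance, and at its core it uses the same mechanism as the paper: approximate $(f,\psi)$ from the dense subspaces $\mathcal{C}_c(G)\times\mathcal{C}_\xi(G,N)$ and control the difference via the Minkowski integral inequality together with the invariance of $\lambda_{G/N}$ under $xN\mapsto y^{-1}xN$; your continuity estimate $\|I(f_n,\psi_n)-I(f,\psi)\|\le\|f_n\|_{L^1(G)}\|\psi_n-\psi\|_{(p)}+\|f_n-f\|_{L^1(G)}\|\psi\|_{(p)}$ is literally the second display of the paper's proof. Where you differ is in how the limit is identified. The paper chooses sequences with geometrically decaying errors, invokes Remark \ref{func.real} to conclude $f\ast_G\psi(x)=\lim_n f_n\ast_G\psi_n(x)$ for l.a.e.\ $x$, and then uses summability of the $L^p(G/N)$-norms of $xN\mapsto\int_G|f_n(y)\psi_n(y^{-1}x)-f(y)\psi(y^{-1}x)|\dd\lambda_G(y)$ to force these integrals to vanish a.e., so that the pointwise limit of $f_n\ast_G\psi_n(x)$ is simultaneously shown to exist, to equal $\int_Gf(y)\psi(y^{-1}x)\dd\lambda_G(y)$, and to represent $f\ast_G\psi$. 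You instead package the integral formula as a bounded bilinear operator $I$ and appeal to agreement on a dense subspace plus uniqueness of limits, which is a cleaner modularization but requires placing $\Phi=I(f,\psi)$ in a suitable space beforehand, something the paper's subsequence argument never needs.

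That is exactly where your write-up has a step that must be repaired. You deduce ``$\Phi\in L^p_\xi(G,N)$'' from $\|\Phi\|_{(p)}\le\|f\|_{L^1(G)}\|\psi\|_{(p)}<\infty$ together with covariance and measurability. This does not follow from anything in the paper: $L^p_\xi(G,N)$ is \emph{defined} as the abstract completion of $(\mathcal{C}_\xi(G,N),\|\cdot\|_{(p)})$, realized inside l.a.e.-classes via Remark \ref{func.real}, and it is nowhere established here that every measurable covariant function with finite $(p)$-norm lies in that closure --- that is a density theorem in its own right, analogous to the density of $\mathcal{C}_c$ in $L^p$. Fortunately the claim is stronger than you need. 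It suffices that $\Phi$ lies in the ambient normed space of l.a.e.-classes of measurable covariant functions of $\xi$ with finite $(p)$-norm, into which $L^p_\xi(G,N)$ embeds isometrically by the final display of Remark \ref{func.real}; then continuity of $I$ into that ambient space, agreement with $\ast_G$ on the dense subspace, and uniqueness of limits in a Hausdorff (normed) space give $f\ast_G\psi=\Phi$ l.a.e., and $\Phi\in L^p_\xi(G,N)$ follows a posteriori. With this rerouting --- plus the Fubini--Tonelli groundwork for general measurable data that you already flagged, and which the paper's own proof implicitly assumes as well --- your argument is complete and yields the same conclusion as Theorem \ref{astLp} combined with formula (\ref{f*psiLpFormula}).
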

\begin{proof}
Let $f\in L^1(G)$ and $\psi\in L^p_\xi(G,N)$. 
Suppose $(f_n)\subset\mathcal{C}_c(G)$ and $(\psi_n)\subset \mathcal{C}_\xi(G,N)$ with $\|f_n-f\|_{L^1(G)}<2^{-(n+2)}$ and $\|\psi_n-\psi\|_{(p)}<2^{-(n+2)}$ for $n\in\mathbb{N}$. We then have $f\ast_G\psi=\lim_nf_n\ast_G\psi_n$ in $L^p_\xi(G,N)$. Suppose $M>0$ with $\|f_n\|_{L^1(G)}<M$ and $\|\psi_n\|_{(p)}<M$ for every $n\in\mathbb{N}$. Then $\|f_{n+1}\ast\psi_{n+1}-f_n\ast\psi_n\|_{(p)}<M2^{-n}$ for $n\in\mathbb{N}$. Therefore, we get $f\ast_G\psi(x)=\lim_nf_n\ast_G\psi_n(x)$ for l.a.e. $x\in G$, according to Remark \ref{func.real}. 
Then,  
\begin{align*}
&\left(\int_{G/N}\left(\int_G|f_{n}(y)\psi_{n}(y^{-1}x)-f(y)\psi(y^{-1}x)|\dd\lambda_G(y)\right)^p\dd\lambda_{G/N}(xN)\right)^{1/p}
\\&\le\int_G\left(\int_{G/N}|f_{n}(y)\psi_{n}(y^{-1}x)-f(y)\psi(y^{-1}x)|^p\dd\lambda_{G/N}(xN)\right)^{1/p}\dd\lambda_G(y)
%\\&\le\int_G\left(\int_{G/N}|f_{n}(y)\psi_{n}(y^{-1}x)-f_{n}(y)\psi(y^{-1}x)+f_{n}(y)\psi(y^{-1}x)-f(y)\psi(y^{-1}x)|^p\dd\lambda_{G/N}(xN)\right)^{1/p}\dd\lambda_G(y)
%\\&\le\int_G\left(\int_{G/N}|f_{n}(y)\psi_{n}(y^{-1}x)-f_{n}(y)\psi(y^{-1}x)|^p\dd\lambda_{G/N}(xN)\right)^{1/p}\dd\lambda_G(y)
%\\&\ +\int_G\left(\int_{G/N}|f_{n}(y)\psi(y^{-1}x)-f(y)\psi(y^{-1}x)|^p\dd\lambda_{G/N}(xN)\right)^{1/p}\dd\lambda_G(y)
%\\&\le\int_G|f_n(y)|\left(\int_{G/N}|\psi_{n}(y^{-1}x)-\psi(y^{-1}x)|^p\dd\lambda_{G/N}(xN)\right)^{1/p}\dd\lambda_G(y)
%\\&\ +\int_G|f_{n}(y)-f(y)|\left(\int_{G/N}|\psi(y^{-1}x)|^p\dd\lambda_{G/N}(xN)\right)^{1/p}\dd\lambda_G(y)
\\&\le \|f_{n}\|_{L^1(G)}\|\psi_{n}-\psi\|_{(p)}+\|f_{n}-f\|_{L^1(G)}\|\psi\|_{(p)}<M2^{-(n+1)},
\end{align*}
which implies that  
\[
\int_{G/N}\left(\int_G|f_{n}(y)\psi_{n}(y^{-1}x)-f(y)\psi(y^{-1}x)|\dd\lambda_G(y)\right)^p\dd\lambda_{G/N}(xN)<M^p2^{-p(n+1)},
\]
for $n\in\mathbb{N}$. Therefore, 
\[
\lim_n\left(\int_G|f_{n}(y)\psi_{n}(y^{-1}x)-f(y)\psi(y^{-1}x)|\dd\lambda_G(y)\right)^p=0,
\]
for a.e. $xN\in G/N$. Since 
\[
\left|f_{n}\ast_G\psi_{n}(x)-\int_Gf(y)\psi(y^{-1}x)\dd\lambda_G(y)\right|\le\int_G|f_{n}(y)\psi_{n}(y^{-1}x)-f(y)\psi(y^{-1}x)|\dd\lambda_G(y),
\]
we conclude that  
\[
\lim_n\left|f_{n}\ast_G\psi_{n}(x)-\int_Gf(y)\psi(y^{-1}x)\dd\lambda_G(y)\right|=0,
\]
for a.e. $xN\in G/N$. This implies that the right hand side of  (\ref{f*psiLpFormula}) is well-defined as a function on $G$, for l.a.e $x\in G$. We also obtain 
\[
\lim_nf_{n}\ast_G\psi_{n}(x)=\int_Gf(y)\psi(y^{-1}x)\dd\lambda_G(y),
\]
for l.c.a $x\in G$. Then, for l.c.a $x\in G$, we have 
\[
f\ast_G\psi(x)=\lim_nf_{n}\ast_G\psi_{n}(x)=\int_Gf(y)\psi(y^{-1}x)\dd\lambda_G(y).
\]
\end{proof}

We then show that $T_\xi:L^1(G)\to L^1_\xi(G,N)$ is a module homomorphism.  

\begin{proposition}
{\it Let $G$ be a locally compact group, $N$ be a closed normal subgroup of $G$, and $\xi\in\chi(N)$. Then, $T_\xi:L^1(G)\to L^1_\xi(G,N)$ is a left Banach $L^1(G)$-module homomorphism.
}\end{proposition}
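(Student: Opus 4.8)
The plan is to recognize the asserted module-homomorphism identity as the statement
$T_\xi(f\ast_G g)=f\ast_G T_\xi(g)$ for all $f,g\in L^1(G)$, where $L^1(G)$ is viewed as a left module over itself via left convolution and $L^1_\xi(G,N)$ carries the extended action of Theorem \ref{astLp} in the case $p=1$. This is precisely the continuous extension of the identity already established on $\mathcal{C}_c(G)$ in Theorem \ref{mult.Cc}, so the whole argument is a density-and-continuity passage; linearity and boundedness of $T_\xi$ are already known, so only the multiplicativity property remains to be checked.

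First I would fix $f,g\in L^1(G)$ and, using the $\|\cdot\|_{L^1(G)}$-density of $\mathcal{C}_c(G)$ in $L^1(G)$, choose sequences $(f_n),(g_n)\subset\mathcal{C}_c(G)$ with $f_n\to f$ and $g_n\to g$ in $L^1(G)$. By Theorem \ref{mult.Cc} we have the exact equality $T_\xi(f_n\ast_G g_n)=f_n\ast_G T_\xi(g_n)$ for every $n$. The goal is then to show that each side converges, in $L^1_\xi(G,N)$, to the corresponding side of the claimed identity, so that the equality survives in the limit.

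For the left-hand side I would use that convolution $\ast_G:L^1(G)\times L^1(G)\to L^1(G)$ is jointly continuous, which follows from $\|f\ast_G g\|_{L^1(G)}\le\|f\|_{L^1(G)}\|g\|_{L^1(G)}$ together with the splitting $\|f_n\ast_G g_n-f\ast_G g\|_{L^1(G)}\le\|f_n\|_{L^1(G)}\|g_n-g\|_{L^1(G)}+\|f_n-f\|_{L^1(G)}\|g\|_{L^1(G)}$; this gives $f_n\ast_G g_n\to f\ast_G g$ in $L^1(G)$, and then contractivity of the extended operator $T_\xi:L^1(G)\to L^1_\xi(G,N)$ yields $T_\xi(f_n\ast_G g_n)\to T_\xi(f\ast_G g)$ in $L^1_\xi(G,N)$. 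For the right-hand side I would again invoke contractivity of $T_\xi$ to get $T_\xi(g_n)\to T_\xi(g)$ in $L^1_\xi(G,N)$, and then the norm bound $\|f\ast_G\psi\|_{(1)}\le\|f\|_{L^1(G)}\|\psi\|_{(1)}$ of the extended module action from Theorem \ref{astLp} to conclude that $(f,\psi)\mapsto f\ast_G\psi$ is jointly continuous, whence $f_n\ast_G T_\xi(g_n)\to f\ast_G T_\xi(g)$ in $L^1_\xi(G,N)$. Passing to the limit in the equalities supplied by Theorem \ref{mult.Cc} then produces $T_\xi(f\ast_G g)=f\ast_G T_\xi(g)$, so $T_\xi$ intertwines the two left actions and is therefore a bounded left $L^1(G)$-module homomorphism.

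I do not expect a serious obstacle: the only mild care needed is that one approximates in both variables of a bilinear map simultaneously, so the two continuity estimates must be combined through the norm splittings indicated above. All the essential inputs—Theorem \ref{mult.Cc} for the identity on the dense subspace, the submultiplicative bound on convolution, the norm bound on the extended module action, and the contractivity of the extended $T_\xi$—are already available in the excerpt, so the proof reduces to assembling them.
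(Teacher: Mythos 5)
Your proof is correct and follows essentially the same route as the paper: both extend the identity $T_\xi(f\ast_G g)=f\ast_G T_\xi(g)$ of Theorem \ref{mult.Cc} from $\mathcal{C}_c(G)$ by density, using contractivity of $T_\xi$ and the norm bounds on the convolution and module action. The only (cosmetic) difference is that you approximate both variables simultaneously via joint continuity of the two bilinear maps, whereas the paper iterates the limit one variable at a time, first with $f\in\mathcal{C}_c(G)$ fixed and then approximating $f$; your norm splittings (with the implicit boundedness of $\|f_n\|_{L^1(G)}$, which holds since $f_n\to f$ in norm) handle this correctly.
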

\begin{proof}
Let $f\in\mathcal{C}_c(G)$ and $g\in L^1(G)$. Suppose $(g_n)\subset\mathcal{C}_c(G)$ with $g=\lim_n g_n$ in $L^1(G)$. Using boundedness of the linear operator $T_\xi:L^1(G)\to L^1_\xi(G,N)$, continuity of the module action in each argument, and Theorem \ref{mult.Cc}, we get 
\begin{align*}
T_\xi(f\ast_G g)=T_\xi(\lim_nf\ast_Gg_n)=\lim_nT_\xi(f\ast_Gg_n)
=\lim_nf\ast_GT_\xi(g_n)=f\ast_GT_\xi(g).
\end{align*}
Similar method guarantees that $T_\xi(f\ast_G g)=f\ast_GT_\xi(g)$, if $f\in L^1(G)$.
\end{proof}
\begin{remark}
Let $\xi=1$ be the trivial character of the normal subgroup $N$ and $1\le p<\infty$. Suppose $f\in L^1(G)$ and $\psi\in L_1^p(G,N)$ is identified with $\widetilde{\psi}\in L^p(G/N)$ as in Remark \ref{J1.Lp}. Then Remark \ref{J1.M1} implies that the convolution module action of $f$ on $\psi$ can be identified by $T_N(f)\ast_{G/N}\widetilde{\psi}$. Therefore, convolution module action of $L^1(G)$ on $L_1^p(G,N)$ coincides with the classical convolution module action of $L^1(G/N)$ on $L^p(G/N)$.
\end{remark}
\begin{remark}\label{NC.L1.Lp}
Suppose that $N$ is a compact and normal subgroup of $G$. Let $1\le p<\infty$ and $\xi\in\chi(N)$. Invoking Remarks \ref{NC} and \ref{NCp} we conclude that the convolution module action $f\ast_G\psi$ is the standard convolution module action of $L^1(G)$ on $L^p(G)$ given by (\ref{f.ast.g}).
\end{remark}

\section{\bf Convolution Modules of Semi-direct Product Group Algebras on Covariant Functions}
Throughout this section, we study different aspects of the structure of convolution module actions induced by group algebras on covariant functions of characters of canonical normal subgroups in semi-direct product groups, for more details on harmonic analysis of semi-direct product groups we refer the reader to see \cite[\S15.26 and \S15.29]{HR1}. 

Suppose that $H,K$ are locally compact groups and $\theta:H\to Aut(K)$ is a continuous homomorphism. Let $G_\theta=H\ltimes_\theta K$ be the semi-direct product of $H$ and $K$ with respect to $\theta$. 
The semi-direct product $G_\theta=H\ltimes_\theta K$ is the locally compact topological group with the underlying set $H\times K$
which is equipped by the product topology and the group operation 
\begin{equation*}
(h,k)\ltimes_\theta(h',k')=(hh',k\theta_h(k'))\hspace{0.5cm}{\rm and}\hspace{0.5cm}(h,k)^{-1}=(h^{-1},\theta_{h^{-1}}(k^{-1})).
\end{equation*}
Assume that $N$ is a closed normal subgroup of $K$ such that $\theta_h(N)=N$, for every $h\in H$. For $h\in H$, let $\delta_{H,N}(h)\in(0,\infty)$ be given by 
\begin{equation}\label{delta.H.N}
\dd\lambda_N(\theta_h(s))=\delta_{H,N}(h)^{-1}\dd\lambda_N(s),
\end{equation}
where $\lambda_N$ is a left Haar measure on $N$.
The continuous homomorphism $\delta_{H,N}$ characterizes Haar measures on the semi-direct product group $G_\theta$.
If $\lambda_H$ and $\lambda_K$ are left Haar measures on $H$ and $K$ respectively. Then, $\dd\lambda_{G_\theta}(h,k):=\delta_{H,K}(h)\dd\lambda_H(h)\dd\lambda_K(k)$, 
is a left Haar measure on $G_\theta$.

The continuous homomorphism $\theta:H\to Aut(K)$ induces the continuous homomorphism of $H$ into $Aut(K/N)$, denoted by $\widetilde{\theta}:H\to Aut(K/N)$, defined by $h\mapsto\widetilde{\theta}_h$, where $\widetilde{\theta}_h:K/N\to K/N$ is given by $\widetilde{\theta}_h(kN):=\theta_h(k)N$, for every $h\in H$ and $kN\in K/N$. Then the factor group $G_\theta/N$ and the semi-direct product group $G_{\widetilde{\theta}}:=H\ltimes_{\widetilde{\theta}}(K/N)$ are canonically isomorphic as topological groups via the topological group isomorphism $(h,k)N\mapsto (h,kN)$. 
\begin{proposition}
{\it Suppose $H,K$ are locally compact groups and $\theta:H\to Aut(K)$ is  a continuous homomorphism. Let $N$ be a closed and normal subgroup of $K$ such that $\theta_h(N)=N$ for every $h\in H$. Assume that $\lambda_{K/N}$ is the left Haar measure on $K/N$ normalized with respect to  left Haar measures $\lambda_N$ on $N$ and $\lambda_K$ on $K$. Then
\begin{enumerate}
\item $\delta_{H,K}(h)=\delta_{H,N}(h)\delta_{H,K/N}(h)$ for every $h\in H$.
\item $\dd\lambda_{G_{\widetilde{\theta}}}(h,kN):=\delta_{H,K/N}(h)\dd\lambda_{H}(h)\dd\lambda_{K/N}(kN)$ uniquely identifies the left Haar measure on $G_\theta/N$ normalized with respect to left Haar measures $\lambda_{G_\theta}$ on $G_\theta$ and $\lambda_N$ on $N$.
\end{enumerate}
}\end{proposition}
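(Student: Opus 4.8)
The plan is to establish (1) first and then feed it into (2) as the decisive ingredient. For (1), I would fix $h\in H$ and evaluate $\int_K f(\theta_h(k))\,\dd\lambda_K(k)$ for $f\in\mathcal{C}_c(K)$ in two different ways. By the defining scaling relation of $\delta_{H,K}$ (the analogue of (\ref{delta.H.N}) on $K$), this integral equals $\delta_{H,K}(h)\int_K f\,\dd\lambda_K$. Alternatively, applying Weil's formula (\ref{6}) on $K$ to the function $k\mapsto f(\theta_h(k))$ and using $\theta_h(ks)=\theta_h(k)\theta_h(s)$, the inner $N$-integral $\int_N f(\theta_h(k)\theta_h(s))\,\dd\lambda_N(s)$ is converted by the change of variables (\ref{delta.H.N}) into $\delta_{H,N}(h)\int_N f(\theta_h(k)s)\,\dd\lambda_N(s)$. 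The remaining coset integral is $\int_{K/N}T_N(f)(\widetilde{\theta}_h(kN))\,\dd\lambda_{K/N}(kN)$, which by the scaling relation defining $\delta_{H,K/N}$ and a second use of Weil's formula becomes $\delta_{H,N}(h)\delta_{H,K/N}(h)\int_K f\,\dd\lambda_K$. Comparing the two evaluations and choosing $f$ with $\int_K f\,\dd\lambda_K\neq 0$ yields $\delta_{H,K}(h)=\delta_{H,N}(h)\delta_{H,K/N}(h)$.

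For (2), I would first use the canonical topological isomorphism $(h,k)N\mapsto(h,kN)$ already recorded in the text, which identifies $G_\theta/N$ with $G_{\widetilde{\theta}}=H\ltimes_{\widetilde{\theta}}(K/N)$. Applying the general semi-direct product Haar measure formula $\dd\lambda_{G_\theta}(h,k)=\delta_{H,K}(h)\dd\lambda_H(h)\dd\lambda_K(k)$ to the group $G_{\widetilde{\theta}}$ shows at once that $\delta_{H,K/N}(h)\dd\lambda_H(h)\dd\lambda_{K/N}(kN)$ is a left Haar measure on $G_\theta/N$. Since the normalization of the quotient measure by Weil's formula relative to $\lambda_{G_\theta}$ and $\lambda_N$ is unique, it remains only to verify that this particular Haar measure satisfies that normalization.

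To check the normalization I would take $F\in\mathcal{C}_c(G_\theta)$ and expand both sides of Weil's formula on $G_\theta$ over $N$. Writing $(h,k)(e_H,s)=(h,k\theta_h(s))$, the fibre integral over $N$ is $\int_N F(h,k\theta_h(s))\,\dd\lambda_N(s)$, and (\ref{delta.H.N}) turns this into $\delta_{H,N}(h)$ times the $N$-averaging of $k\mapsto F(h,k)$ on $K$. Integrating against $\delta_{H,K/N}(h)\dd\lambda_H(h)\dd\lambda_{K/N}(kN)$, pulling out the $h$-factor $\delta_{H,N}(h)\delta_{H,K/N}(h)$, and invoking Weil's formula (\ref{6}) on $K$ collapses the $K/N$-integral to $\int_K F(h,k)\,\dd\lambda_K(k)$. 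Finally part (1) rewrites $\delta_{H,N}(h)\delta_{H,K/N}(h)$ as $\delta_{H,K}(h)$, so the whole expression becomes $\int_H\int_K F(h,k)\delta_{H,K}(h)\,\dd\lambda_K(k)\,\dd\lambda_H(h)=\int_{G_\theta}F\,\dd\lambda_{G_\theta}$, which is exactly the required identity.

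The main obstacle I anticipate is the bookkeeping in the reduction: correctly passing from integration over $N$ inside $G_\theta$ to the $N$-averaging on $K$ while tracking the twist by $\theta_h$, which is precisely what forces the factor $\delta_{H,N}(h)$ to surface and makes part (1) indispensable for part (2). A secondary point deserving a line of justification is the well-definedness of $\delta_{H,K/N}$, namely that each $\widetilde{\theta}_h$ rescales $\lambda_{K/N}$ by a positive scalar; this holds because $\widetilde{\theta}_h$ is a topological automorphism of $K/N$, so $\lambda_{K/N}\circ\widetilde{\theta}_h$ is again a left Haar measure and hence proportional to $\lambda_{K/N}$.
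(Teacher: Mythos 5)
Your proposal is correct, and for part (2) it is essentially the paper's own computation run in reverse: the paper takes $\Phi=T_N(f)$ with $f\in\mathcal{C}_c(G_\theta)$, starts from $\int_{G_\theta/N}\Phi\,\dd\lambda_{G_\theta/N}=\int_{G_\theta}f\,\dd\lambda_{G_\theta}$ (Weil's formula, which is how the normalized quotient measure is defined), and then decomposes the right-hand side using Weil's formula (\ref{6}) on $K$, the identity of part (1), and the change of variables (\ref{delta.H.N}) until the product form $\delta_{H,K/N}(h)\,\dd\lambda_H(h)\,\dd\lambda_{K/N}(kN)$ emerges; you instead start from that product measure as a candidate, verify it reproduces Weil's normalization on $\mathcal{C}_c(G_\theta)$, and appeal to uniqueness of the normalized quotient measure. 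The ingredients are identical, so the arguments are interchangeable; your version is a little more scaffolded, separating ``the candidate is a left Haar measure on $G_{\widetilde{\theta}}\cong G_\theta/N$'' (via the general semi-direct product Haar measure formula applied to $\widetilde{\theta}$) from ``it carries the right normalization,'' whereas the paper's single chain of equalities establishes both at once via the Riesz-type identification of the functional on $\mathcal{C}_c(G_\theta/N)$. The genuine divergence is in part (1): the paper simply cites Proposition 11 of Bourbaki [Chap.~VII, \S 2], while you give a self-contained proof by evaluating $\int_K f(\theta_h(k))\,\dd\lambda_K(k)$ in two ways; that double evaluation is correct (the fibrewise change of variables over $N$ surfaces $\delta_{H,N}(h)$, the base change over $K/N$ surfaces $\delta_{H,K/N}(h)$, and comparison with the defining scaling of $\delta_{H,K}$ gives the product identity), and it buys self-containedness at the cost of bookkeeping the citation avoids. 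Your closing observation on the well-definedness of $\delta_{H,K/N}$ --- that each $\widetilde{\theta}_h$ is a topological automorphism of $K/N$, so it rescales $\lambda_{K/N}$ by a positive scalar --- addresses a point the paper leaves implicit and is worth the line you give it.
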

\begin{proof}
(1) This follows from Proposition 11 of \cite[Chap. VII, \S2]{Bour}. \\
(2) Let $\Phi\in\mathcal{C}_c(G_\theta/N)$ and $f\in\mathcal{C}_c(G_\theta)$ with $T_N(f)=\Phi$. Suppose that $\lambda_{G_\theta/N}$ is the left Haar measure on $G_\theta/N$ normalized with respect to left Haar measures $\lambda_{G_\theta}$ on $G_\theta$ and $\lambda_N$ on $N$. Then 
\begin{align*} 
\int_{G_\theta/N}\Phi(xN)\dd\lambda_{G_\theta/N}(xN)
%&=\int_{G_\theta}f(h,k)\dd\lambda_{G_\theta}(h,k)
&=\int_{H}\int_Kf(h,k)\delta_{H,K}(h)\dd\lambda_H(h)\dd\lambda_K(k)
\\&=\int_H\int_{K/N}\left(\int_Nf(h,ks)\dd\lambda_N(s)\right)\dd\lambda_{K/N}(kN)\delta_{H,K}(h)\dd\lambda_H(h)
\\&=\int_H\int_{K/N}\left(\int_Nf(h,ks)\dd\lambda_N(s)\right)\dd\lambda_{K/N}(kN)\delta_{H,N}(h)\delta_{H,K/N}(h)\dd\lambda_H(h)
\\&=\int_H\int_{K/N}\left(\int_Nf(h,ks)\dd\lambda_N(\theta_{h^{-1}}(s))\right)\dd\lambda_{K/N}(kN)\delta_{H,K/N}(h)\dd\lambda_H(h)
\\&=\int_H\int_{K/N}\left(\int_Nf(h,k\theta_h(s))\dd\lambda_N(s)\right)\dd\lambda_{K/N}(kN)\delta_{H,K/N}(h)\dd\lambda_H(h)
\\&=\int_H\int_{K/N}\Phi((h,k)N)\dd\lambda_{K/N}(kN)\delta_{H,K/N}(h)\dd\lambda_H(h).
\end{align*}
\end{proof}

Suppose that $N$ is a closed subgroup of $K$ with $\theta_h(N)=N$ for every $h\in H$. For $\xi\in\chi(N)$ and $\psi\in \mathcal{C}_\xi(G_\theta,N)$, we have  
\begin{equation}\label{main.cov.prop}
\psi(h,s)=\xi\circ\theta_{h^{-1}}(s)\psi(h,e_K)\ \ \ \ \ \ {\rm for}\ \ (h,s)\in H\times N.
\end{equation}
In this case, for $1\le p<\infty$, the norm $\|\psi\|_{(p)}$ can be computed by  
\[
\|\psi\|_{(p)}^p:=\int_H\int_{K/N}|\psi(h,k)|^p\delta_{H,K/N}(h)\dd\lambda_{H}(h)\dd\lambda_{K/N}(kN).
\]
Let $f\in\mathcal{C}_c(G_\theta)$, $\psi\in \mathcal{C}_\xi(G_\theta,N)$, and $(a,b)\in G_\theta$. Then $f\ast\psi:G_\theta\to\mathbb{C}$ is given by  
\begin{equation}\label{*GtheaN}
f\ast\psi(a,b)=\int_H\int_{K}f(h,k)\psi(h^{-1}a,\theta_{h^{-1}}(k^{-1}b))\delta_{H,K}(h)\dd\lambda_{H}(h)\dd\lambda_{K}(k).
\end{equation}

We then have the following explicit formula for $f\ast\psi$, when $N=K$.

\begin{proposition}\label{Jxi.K}
{\it Let $H,K$ be locally compact groups and $\theta:H\to Aut(K)$ be a continuous homomorphism and $G_\theta=H\ltimes_\theta K$. Suppose $\xi\in\chi(K)$, $f\in\mathcal{C}_c(G_\theta)$, $\psi\in\mathcal{C}_\xi(G_\theta,K)$, and $(a,b)\in G_\theta$. Then 
\[
f\ast\psi(a,b)=\xi\circ\theta_{a^{-1}}(b)\int_H\int_{K}f(h,k)\psi(h^{-1}a,e_K)\overline{\xi\circ\theta_{a^{-1}}(k)}\delta_{H,K}(h)\dd\lambda_{H}(h)\dd\lambda_{K}(k).
\]
}\end{proposition}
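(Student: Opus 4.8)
The plan is to obtain the formula by a single application of the covariance property (\ref{main.cov.prop}) to the integrand of (\ref{*GtheaN}), followed by factoring the character $\xi\circ\theta_{a^{-1}}$ out of the double integral. Since here $N=K$, the covariance identity (\ref{main.cov.prop}) holds for \emph{every} $(h,s)\in H\times K$, so it applies directly to $\psi(h^{-1}a,\theta_{h^{-1}}(k^{-1}b))$, whose second coordinate $\theta_{h^{-1}}(k^{-1}b)$ lies in $K$. First I would rewrite
\[
\psi\big(h^{-1}a,\theta_{h^{-1}}(k^{-1}b)\big)=\xi\circ\theta_{(h^{-1}a)^{-1}}\big(\theta_{h^{-1}}(k^{-1}b)\big)\,\psi(h^{-1}a,e_K).
\]

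The key algebraic simplification uses that $\theta$ is a homomorphism. Since $(h^{-1}a)^{-1}=a^{-1}h$, one has $\theta_{(h^{-1}a)^{-1}}=\theta_{a^{-1}}\circ\theta_{h}$, and composing with the inner $\theta_{h^{-1}}$ collapses the chain: $\theta_{a^{-1}}\circ\theta_{h}\circ\theta_{h^{-1}}=\theta_{a^{-1}}$, because $\theta_{h}\circ\theta_{h^{-1}}=\theta_{e_H}=\mathrm{id}_K$. Hence the automorphism factor reduces to
\[
\xi\circ\theta_{(h^{-1}a)^{-1}}\big(\theta_{h^{-1}}(k^{-1}b)\big)=\xi\circ\theta_{a^{-1}}(k^{-1}b).
\]

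Next I would use that $\xi\circ\theta_{a^{-1}}$ is itself a character of $K$, being the composition of the character $\xi$ with the topological automorphism $\theta_{a^{-1}}$; it is therefore multiplicative and $\mathbb{T}$-valued. Multiplicativity gives $\xi\circ\theta_{a^{-1}}(k^{-1}b)=\xi\circ\theta_{a^{-1}}(k^{-1})\,\xi\circ\theta_{a^{-1}}(b)$, and since the values lie in $\mathbb{T}$ we have $\xi\circ\theta_{a^{-1}}(k^{-1})=\overline{\xi\circ\theta_{a^{-1}}(k)}$. Substituting back into (\ref{*GtheaN}) and pulling the factor $\xi\circ\theta_{a^{-1}}(b)$, which is independent of the integration variables $h,k$, out of the double integral yields precisely the claimed formula. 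The computation is essentially bookkeeping, so I do not expect a serious obstacle; the one place demanding care is tracking the automorphism composition so that $\theta_{(h^{-1}a)^{-1}}\circ\theta_{h^{-1}}$ correctly cancels down to $\theta_{a^{-1}}$, while the passage from the inverse to a complex conjugate is immediate from the definition of a character into $\mathbb{T}$.
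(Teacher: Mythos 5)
Your proposal is correct and follows the paper's proof essentially verbatim: the paper likewise applies the covariance identity (\ref{main.cov.prop}) to $\psi\bigl(h^{-1}a,\theta_{h^{-1}}(k^{-1}b)\bigr)$, collapses $\theta_{a^{-1}h}\circ\theta_{h^{-1}}=\theta_{a^{-1}}$ via the homomorphism property, and then uses multiplicativity of the character $\xi\circ\theta_{a^{-1}}$ together with $\xi\circ\theta_{a^{-1}}(k^{-1})=\overline{\xi\circ\theta_{a^{-1}}(k)}$ to factor out $\xi\circ\theta_{a^{-1}}(b)$. No gaps.
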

\begin{proof}
Let $f\in\mathcal{C}_c(G_\theta)$, $\psi\in\mathcal{C}_\xi(G_\theta,K)$ and $(a,b)\in G_\theta$. Using (\ref{main.cov.prop}) and (\ref{*GtheaN}), we get 
\begin{align*}
f\ast\psi(a,b)
&=\int_H\int_{K}f(h,k)\psi(h^{-1}a,\theta_{h^{-1}}(k^{-1}b))\delta_{H,K}(h)\dd\lambda_{H}(h)\dd\lambda_{K}(k)
\\&=\int_H\int_{K}f(h,k)\xi\circ\theta_{a^{-1}h}(\theta_{h^{-1}}(k^{-1}b))\psi(h^{-1}a,e_K)\delta_{H,K}(h)\dd\lambda_{H}(h)\dd\lambda_{K}(k)
\\&=\int_H\int_{K}f(h,k)\xi\circ\theta_{a^{-1}}(k^{-1}b)\psi(h^{-1}a,e_K)\delta_{H,K}(h)\dd\lambda_{H}(h)\dd\lambda_{K}(k)
\\&=\xi\circ\theta_{a^{-1}}(b)\int_H\int_{K}f(h,k)\xi\circ\theta_{a^{-1}}(k^{-1})\psi(h^{-1}a,e_K)\delta_{H,K}(h)\dd\lambda_{H}(h)\dd\lambda_{K}(k)
\\&=\xi\circ\theta_{a^{-1}}(b)\int_H\int_{K}f(h,k)\psi(h^{-1}a,e_K)\overline{\xi\circ\theta_{a^{-1}}(k)}\delta_{H,K}(h)\dd\lambda_{H}(h)\dd\lambda_{K}(k).
\end{align*}
\end{proof}

We then finish the paper by investigation of convolution module actions induced by group algebras on covariant functions of characters in different examples of semi-direct product groups. 

\subsection{Abstract Weyl-Heisenberg Groups}
Let $L$ be a locally compact Abelian group and $\widehat{L}$ be the dual group of $L$. Assume that $H:=L$ and $K:=\widehat{L}\times\mathbb{T}$. Suppose that $\theta:H\to Aut(K)$ is the continuous homomorphism $x\mapsto\theta_x$, where $\theta_x:K\to K$ is defined by $\theta_x(\omega,z):=(\omega,\omega(x)z)$,
for $(\omega,z)\in K=\widehat{L}\times\mathbb{T}$.
The abstract Weyl-Heisenberg group associated to the LCA group $L$, denoted by $\mathbb{W}(L)$, is the semi-direct product $H\ltimes_\theta K$, 
see \cite{ AGHF.GWHG}. 
\subsubsection{\bf The Case $N=\mathbb{T}$}
Let $N:=\{(e_L,\mathbf{1},z):z\in\mathbb{T}\}$. Then $N$ is a closed and central subgroup of $\mathbb{W}(L)$. Also, $\theta_x(\mathbf{1},z)=(\mathbf{1},z)$ for $x\in L$ and $z\in\mathbb{T}$. Then $\chi(N)=\{\chi_n:n\in \mathbb{Z}\}$, where $\chi_n:\mathbb{T}\to\mathbb{T}$ is defined by $\chi_n(z):=z^n$ for $z\in\mathbb{T}$ and $n\in\mathbb{Z}$. Let $n\in\mathbb{Z}$ and $\xi:=\chi_n$.
Then, every $\psi\in\mathcal{C}_\xi(\mathbb{W}(L),\mathbb{T})$ satisfies 
\begin{equation}\label{cov.prop.T}
\psi(x,\omega,z)=z^n\psi(x,\omega,1)\ \ \ \ {\rm for}\ (x,\omega,z)\in\mathbb{W}(L).
\end{equation}
In this case, for $1\le p<\infty$, the norm $\|\psi\|_{(p)}$ is given by  
\[
\|\psi\|_{(p)}^p=\int_L\int_{\widehat{L}}|\psi(x,\omega,1)|^p\dd\lambda_{L}(x)\dd\lambda_{\widehat{L}}(\omega)=\|\psi\|_{L^p(\mathbb{W}(L))}^p.
\]
Let $f\in\mathcal{C}_c(\mathbb{W}(L))$, $\psi\in \mathcal{C}_\xi(\mathbb{W}(L),\mathbb{T})$ and $(x,\omega,z)\in\mathbb{W}(L)$. Then, using (\ref{*GtheaN}), we obtain   
\begin{align*}
f\ast\psi(x,\omega,z)
=\frac{1}{2\pi}\int_L\int_{\widehat{L}}\int_0^{2\pi}f(x',\omega',e^{\ii\alpha})\psi(x-x',\omega\overline{\omega'},\overline{\omega(x')}\omega'(x')ze^{-\ii\alpha})\dd\alpha\dd\lambda_{L}(x')\dd\lambda_{\widehat{L}}(\omega').
\end{align*}
Then, (\ref{cov.prop.T}) implies that  
\begin{equation}\label{f*psi.T}
f\ast\psi(x,\omega,z)
=\frac{z^n}{2\pi}\int_L\int_{\widehat{L}}\int_0^{2\pi}\omega(x')^{-n}\omega'(x')^ne^{-\ii n\alpha}f(x',\omega',e^{\ii\alpha})\psi(x-x',\omega\overline{\omega'},1)\dd\alpha\dd\lambda_{L}(x')\dd\lambda_{\widehat{L}}(\omega').
\end{equation}
\begin{example}
Let $d\ge 1$ and $L:=\mathbb{R}^d$. Then $\widehat{\mathbb{R}^d}=\{\widehat{\bom}:\bom\in\mathbb{R}^d\}$, where $\widehat{\bom}:\mathbb{R}^d\to\mathbb{T}$ is given by $\widehat{\bom}(\mathbf{x}):=e^{\ii\langle\mathbf{x},\bom\rangle}$, for all $\mathbf{x}\in\mathbb{R}^d$. 
Let $n\in\mathbb{Z}$, $\xi:=\chi_n$, and $\psi\in \mathcal{C}_{\xi}(\mathbb{W}(\mathbb{R}^d),\mathbb{T})$. Suppose $f\in\mathcal{C}_c(\mathbb{W}(\mathbb{R}^d))$ and $(\mathbf{x},\bom,z)\in\mathbb{W}(\mathbb{R}^d)$. Then, using (\ref{f*psi.T}), we get 
\[
f\ast\psi(\mathbf{x},\bom,z)=\frac{z^n}{2\pi}\int_{\mathbb{R}^d}\int_{\mathbb{R}^d}\int_0^{2\pi}e^{\ii n\langle\bom'-\bom,\mathbf{x}'\rangle}e^{-\ii n\alpha}f(\mathbf{x}',\bom',e^{\ii\alpha})\psi(\mathbf{x}-\mathbf{x}',\bom-\bom',1)\dd\alpha\dd\mathbf{x'}\dd\bom'.
\] 
\end{example}
\begin{example}
Let $d\ge 1$ and $L:=\mathbb{Z}^d$. Then 
$\widehat{\mathbb{Z}^d}=\{\widehat{\mathbf{z}}:\mathbf{z}=(z_1,...,z_d)^T\in\mathbb{T}^d\}$, where the character $\widehat{\mathbf{z}}:\mathbb{Z}^d\to\mathbb{T}$ is given by 
$\widehat{\mathbf{z}}(\mathbf{k}):=\prod_{j=1}^dz_j^{k_j}$, for $\mathbf{k}:=(k_1,...,k_d)^T\in\mathbb{Z}^d$ and $\mathbf{z}:=(z_1,...,z_d)^T\in\mathbb{T}^d$. Let $n\in\mathbb{Z}$, $\xi:=\chi_n$, and $\psi\in \mathcal{C}_{\xi}(\mathbb{Z}^d\ltimes\mathbb{T}^{d+1},\mathbb{T})$. Suppose $f\in\mathcal{C}_c(\mathbb{Z}^d\ltimes\mathbb{T}^{d+1})$ and $(\mathbf{k},\mathbf{z},z)\in\mathbb{W}(\mathbb{Z}^d)=\mathbb{Z}^d\ltimes\mathbb{T}^{d+1}$. Then, using (\ref{f*psi.T}), we have 
\[
f\ast\psi(\mathbf{k},\mathbf{z},z)=\frac{z^n}{2\pi}\sum_{\mathbf{k}'\in\mathbb{Z}^d}\int_{\mathbb{T}^d}\int_0^{2\pi}\widehat{\mathbf{z}}(\mathbf{k}')^{-n}\widehat{\mathbf{z}'}(\mathbf{k}')^ne^{-\ii n\alpha}f(\mathbf{k}',\mathbf{z}',e^{\ii\alpha})\psi(\mathbf{k}-\mathbf{k}',\mathbf{z}\overline{\mathbf{z}'},1)\dd\alpha\dd\mathbf{z}'.
\]
\end{example}
\begin{example}
Let $M>0$ be a positive integer and $L:=\mathbb{Z}_M$ be the finite cyclic 
group of integers modulo $M$. Then $\widehat{L}=\{\widehat{\ell}:\ell\in\mathbb{Z}_M\}$, where $\widehat{\ell}:\mathbb{Z}_M\to\mathbb{T}$ is given by 
$\widehat{\ell}(m):=e^{2\pi\ii m\ell/M}$, for $m,\ell\in\mathbb{Z}_M$.  Let $n\in\mathbb{Z}$, $\xi:=\chi_n$, and $\psi\in \mathcal{C}_\xi(\mathbb{Z}_M\ltimes(\mathbb{Z}_M\times\mathbb{T}),\mathbb{T})$. Suppose $f\in\mathcal{C}_c(\mathbb{W}(\mathbb{Z}_M))$ and $(m,\ell,z)\in\mathbb{W}(\mathbb{Z}_M)$. Then, using (\ref{f*psi.T}), we achieve 
\[
f\ast\psi(m,\ell,z)=\frac{z^n}{2\pi}\sum_{m'=0}^{M-1}\sum_{\ell'=0}^{M-1}\int_0^{2\pi}e^{2\pi\ii n(\ell'-\ell)m'/M}e^{-\ii n\alpha}f(m',\ell',e^{\ii\alpha})\psi(m-m',\ell-\ell',1)\dd\alpha.
\]
\end{example}
\subsubsection{\bf The Case $N=\widehat{L}\times\mathbb{T}$}
Let $N:=\{(e_L,\omega,z):\omega\in\widehat{L},z\in\mathbb{T}\}$.
Then, $N$ is normal in $\mathbb{W}(L)$ and $\chi(N)=\{\chi_{y,n}:y\in L, n\in\mathbb{Z}\}$, where $\chi_{y,n}:N\to\mathbb{T}$ is defined by 
$\chi_{y,n}(\zeta,e^{\ii\alpha}):=\zeta(y)e^{\ii n\alpha}$, for $(\zeta,e^{\ii\alpha})\in\widehat{L}\times\mathbb{T}$. 
Let $\xi:=\chi_{y,n}$ with $y\in L$ and $n\in\mathbb{N}$. 
Then, each $\psi\in \mathcal{C}_\xi(\mathbb{W}(L),\widehat{L}\times\mathbb{T})$ satisfies 
\begin{equation}\label{cov.prop.LT}
\psi(x,\omega,z)=\omega(y)\omega(x)^{-n}z^n\psi(x,\mathbf{1},1)\ \ \ \ {\rm for}\ \ (x,\omega,z)\in\mathbb{W}(L).
\end{equation}
In this case, for $1\le p<\infty$, the norm $\|\psi\|_{(p)}$ is given by  
\[
\|\psi\|_{(p)}^p=\int_L|\psi(x,\mathbf{1},1)|^p\dd\lambda_{L}(x).
\]
Let $f\in\mathcal{C}_c(\mathbb{W}(L))$, $\psi\in \mathcal{C}_\xi(\mathbb{W}(L),\widehat{L}\times\mathbb{T})$ and $(x,\omega,z)\in\mathbb{W}(L)$. Then   
\begin{align*}
f\ast\psi(x,\omega,z)
=\frac{1}{2\pi}\int_L\int_{\widehat{L}}\int_0^{2\pi}f(x',\omega',e^{\ii\alpha})\psi(x-x',\omega\overline{\omega'},\overline{\omega(x')}\omega'(x')ze^{-\ii\alpha})\dd\alpha\dd\lambda_{L}(x')\dd\lambda_{\widehat{L}}(\omega'),
\end{align*}
and applying (\ref{cov.prop.LT}), we get 
\begin{equation}\label{f*psi.LT}
f\ast\psi(x,\omega,z)
=\frac{z^n\omega(y)}{2\pi\omega(x)^n}\int_L\int_{\widehat{L}}\int_0^{2\pi}f(x',\omega',e^{\ii\alpha})\psi(x-x',\mathbf{1},1)\overline{\omega'(y)}\omega'(x)^{n}e^{-\ii n\alpha}\dd\alpha\dd\lambda_{L}(x')\dd\lambda_{\widehat{L}}(\omega').
\end{equation}
\begin{example}
Let $d\ge 1$ and $L:=\mathbb{R}^d$. Assume that $(\mathbf{y},n)\in\mathbb{R}^d\times\mathbb{Z}$ and $\xi:=\chi_{\mathbf{y},n}$. Suppose $\psi\in \mathcal{C}_{\xi}(\mathbb{W}(\mathbb{R}^d),\mathbb{R}^d\times\mathbb{T})$, $f\in\mathcal{C}_c(\mathbb{R}^{2d}\times\mathbb{T})$, and $(\mathbf{x},\bom,z)\in\mathbb{W}(\mathbb{R}^d)$. Then, using (\ref{f*psi.LT}), we have
\[
f\ast\psi(\mathbf{x},\bom,z)=\frac{z^ne^{\ii\langle\mathbf{y},\bom\rangle}}{2\pi e^{\ii n\langle\mathbf{x},\bom\rangle}}\int_{\mathbb{R}^d}\int_{\mathbb{R}^d}\int_0^{2\pi}f(\mathbf{x}',\bom',e^{\ii\alpha})\psi(\mathbf{x}-\mathbf{x}',\mathbf{0},1)e^{-\ii\langle\bom',\mathbf{y}-n\mathbf{x}\rangle}e^{-\ii n\alpha}\dd\alpha\dd\mathbf{x'}\dd\bom'.
\] 
\end{example}
\begin{example}
Let $d\ge 1$ and $L:=\mathbb{Z}^d$. Assume that $(\mathbf{n},n)\in\mathbb{Z}^d\times\mathbb{Z}$ and $\xi:=\chi_{\mathbf{n},n}$. Suppose $\psi\in \mathcal{C}_\xi(\mathbb{Z}^d\ltimes\mathbb{T}^{d+1},\mathbb{T}^{d+1})$, $f\in\mathcal{C}_c(\mathbb{Z}^d\ltimes\mathbb{T}^{d+1})$, and $(\mathbf{k},\mathbf{z},z)\in\mathbb{W}(\mathbb{Z}^d)$. Then, using (\ref{f*psi.LT}), we get 
\[
f\ast\psi(\mathbf{k},\mathbf{z},z)=\frac{z^n\widehat{\mathbf{z}}(\mathbf{n})}{2\pi\widehat{\mathbf{z}}(\mathbf{k})^n}\sum_{\mathbf{k}'\in\mathbb{Z}^d}\int_{\mathbb{T}^d}\int_0^{2\pi}f(\mathbf{k}',\mathbf{z}',e^{\ii\alpha})\psi(\mathbf{k}-\mathbf{k}',\mathbf{1},1)\overline{\widehat{\mathbf{z}'}(\mathbf{n})}\widehat{\mathbf{z}'}(\mathbf{k})^ne^{-\ii n\alpha}\dd\alpha\dd\mathbf{z}'.
\]
\end{example}
\begin{example}
Let $M>0$ be a positive integer and $L:=\mathbb{Z}_M$ be the finite cyclic 
group of integers modulo $M$. Assume that $(k,n)\in\mathbb{Z}_M\times\mathbb{Z}$ and $\xi:=\chi_{k,n}$. Suppose $\psi\in \mathcal{C}_\xi(\mathbb{W}(\mathbb{Z}_M),\mathbb{Z}_M\times\mathbb{T})$, $f\in\mathcal{C}_c(\mathbb{W}(\mathbb{Z}_M))$, and $(m,\ell,z)\in\mathbb{W}(\mathbb{Z}_M)$. Then, using (\ref{f*psi.LT}), we achieve 
\[
f\ast\psi(m,\ell,z)=\frac{z^ne^{2\pi\ii\ell k/M}}{2\pi e^{2\pi\ii nmk/M}}\sum_{m'=0}^{M-1}\sum_{\ell'=0}^{M-1}\int_0^{2\pi}f(m',\ell',e^{\ii\alpha})\psi(m-m',\ell-\ell',1)e^{-2\pi\ii\ell'(k-nm)/M}e^{-\ii n\alpha}\dd\alpha.
\]
\end{example}

\subsection{The Heisenberg Groups}
Assume that $d\ge 1$ and locally compact groups $H,K$ are given by $H:=\mathbb{R}^d$ and $K:=\mathbb{R}^d\times\mathbb{R}$. 
For each $\mathbf{x}\in H$, define the map $\theta_\mathbf{x}:K\to K$ by 
$\theta_\mathbf{x}(\mathbf{y},s):=(\mathbf{y},s+\langle\mathbf{x},\mathbf{y}\rangle)$, for all $(\mathbf{y},s)\in K$. The Heisenberg group $\mathbb{H}^d:=\mathbb{R}^{2d+1}$ is the semi-direct product $G_\theta:=H\ltimes_\theta K$. Then $\chi(K)=\{\mathbf{e}_{\mathbf{z},\nu}:\nu\in\mathbb{R},\mathbf{z}\in\mathbb{R}^d\}$, where 
$\mathbf{e}_{\mathbf{z},\nu}(\mathbf{y},s)
:=e^{\ii\langle\mathbf{z},\mathbf{y}\rangle}e^{\ii \nu s}$,
for all $s,\nu\in\mathbb{R}$ and $\mathbf{z},\mathbf{y}\in\mathbb{R}^d$.
\subsubsection{\bf The Case $N=\mathbb{R}$}
Let $N:=\{(\mathbf{0},\mathbf{0},s):s\in\mathbb{R}\}$. Then $N$ is a central subgroup of $\mathbb{H}^d$ and  
$\theta_\mathbf{x}(\mathbf{0},\mathbf{0},s)=(\mathbf{0},\mathbf{0},s)$, for $\mathbf{x}\in H$ and $s\in\mathbb{R}$. 
Also, $\chi(\mathbb{R})=\{\mathbf{e}_\nu:\nu\in\mathbb{R}\}$, where $\mathbf{e}_\nu:=\mathbf{e}_{\mathbf{0},\nu}$. Let $\nu\in\mathbb{R}$ and $\xi:=\mathbf{e}_\nu\in\chi(N)$. Then, every $\psi\in \mathcal{C}_\xi(\mathbb{H}^d,\mathbb{R})$ satisfies 
\begin{equation}\label{cov.prop.H.R}
\psi(\mathbf{x},\mathbf{y},t)=e^{\ii\nu t}\psi(\mathbf{x},\mathbf{y},0)\ \ \ {\rm for}\ (\mathbf{x},\mathbf{y},t)\in\mathbb{H}^d.
\end{equation}
In this case, for $1\le p<\infty$, the norm $\|\psi\|_{(p)}$ is given by  
\[
\|\psi\|_{(p)}^p=\int_{\mathbb{R}^d}\int_{\mathbb{R}^d}|\psi(\mathbf{x},\mathbf{y},0)|^p\dd\mathbf{x}\dd\mathbf{y}.
\]
Let $(\mathbf{x},\mathbf{y},t)\in\mathbb{H}^d$, $f\in\mathcal{C}_c(\mathbb{H}^d)$ and $\psi\in \mathcal{C}_\xi(\mathbb{H}^d,\mathbb{R})$. Then 
\[
f\ast\psi(\mathbf{x},\mathbf{y},t)=\int_{\mathbb{R}^d}\int_{\mathbb{R}^d}\int_{-\infty}^\infty f(\mathbf{x}',\mathbf{y}',t')\psi(\mathbf{x}-\mathbf{x}',\mathbf{y}-\mathbf{y}',t-t'+\langle\mathbf{x}',\mathbf{y}'-\mathbf{y}\rangle)\dd\mathbf{x}'\dd\mathbf{y}'\dd t',
\]
and using (\ref{cov.prop.H.R}), we get 
\[
f\ast\psi(\mathbf{x},\mathbf{y},t)=e^{\ii\nu t}\int_{\mathbb{R}^d}\int_{\mathbb{R}^d}\int_{-\infty}^\infty f(\mathbf{x}',\mathbf{y}',t')\psi(\mathbf{x}-\mathbf{x}',\mathbf{y}-\mathbf{y}',0)e^{-\ii\nu(t'+\langle\mathbf{x}',\mathbf{y}-\mathbf{y}'\rangle)}\dd\mathbf{x}'\dd\mathbf{y}'\dd t'.
\]
\subsubsection{\bf The Case $N=\mathbb{R}^{d+1}$}
Let $N:=\{(\mathbf{y},s):s\in\mathbb{R},\mathbf{y}\in\mathbb{R}^d\}$.
Then, $N$ is a closed normal subgroup of $\mathbb{H}^d$. 
Suppose that $(\mathbf{z},\nu)\in\mathbb{R}^d\times\mathbb{R}$ and also $\xi:=\mathbf{e}_{\mathbf{z},\nu}$. So every $\psi\in \mathcal{C}_\xi(\mathbb{H}^d,\mathbb{R}^{d+1})$ satisfies 
\begin{equation}\label{cov.prop.H.Rd+1}
\psi(\mathbf{x},\mathbf{y},t)=e^{\ii\nu(t-\langle\mathbf{x},\mathbf{y}\rangle)}e^{\ii\langle\mathbf{z},\mathbf{y}\rangle}\psi(\mathbf{x},\mathbf{0},0)\ \ \ {\rm for}\ (\mathbf{x},\mathbf{y},t)\in\mathbb{H}^d.
\end{equation}
In this case, for $1\le p<\infty$, the norm $\|\psi\|_{(p)}$ is given by  
\[
\|\psi\|_{(p)}^p=\int_{\mathbb{R}^d}|\psi(\mathbf{x},\mathbf{0},0)|\dd\mathbf{x}.
\]
Let $(\mathbf{x},\mathbf{y},t)\in\mathbb{H}^d$, $f\in\mathcal{C}_c(\mathbb{H}^d)$ and $\psi\in \mathcal{C}_\xi(\mathbb{H}^d,\mathbb{R}^{d+1})$. Then  
\[
f\ast\psi(\mathbf{x},\mathbf{y},t)=\int_{\mathbb{R}^d}\int_{\mathbb{R}^d}\int_{-\infty}^\infty f(\mathbf{x}',\mathbf{y}',t')\psi(\mathbf{x}-\mathbf{x}',\mathbf{y}-\mathbf{y}',t-t'+\langle\mathbf{x}',\mathbf{y}'-\mathbf{y}\rangle)\dd\mathbf{x}'\dd\mathbf{y}'\dd t',
\]
and using (\ref{cov.prop.H.Rd+1}), we get 
\[
f\ast\psi(\mathbf{x},\mathbf{y},t)=e^{\ii\nu(t-\langle\mathbf{x},\mathbf{y}\rangle)}e^{\ii\langle\mathbf{z},\mathbf{y}\rangle}\int_{\mathbb{R}^d}\int_{\mathbb{R}^d}\int_{-\infty}^\infty f(\mathbf{x}',\mathbf{y}',t')\psi(\mathbf{x}-\mathbf{x}',\mathbf{0},0)e^{\ii\nu(\langle\mathbf{x},\mathbf{y}'\rangle-t')}e^{-\ii\langle\mathbf{z},\mathbf{y}'\rangle}\dd\mathbf{x}'\dd\mathbf{y}'\dd t'.
\]

%\newpage

{\bf Acknowledgement.}
This project has received funding from the European Union’s Horizon 2020 research and innovation programme under the Marie Sklodowska-Curie grant agreement No. 794305. The author gratefully acknowledges the supporting agency. The findings and opinions expressed here are only those of the author, and not of the funding agency.\\
The author would like to express his deepest gratitude to Vladimir V. Kisil for suggesting the problem that motivated the results in this article, and 
stimulating discussions.

\bibliographystyle{amsplain}

\end{document}